\documentclass[10pt]{amsart}
\usepackage{amssymb}
\usepackage{tikz-cd}
\newtheorem{thm}{Theorem}
\newtheorem*{sthm}{Theorem}
\newtheorem*{llem}{Lemma}
\newtheorem{lem}[thm]{Lemma}
\newtheorem{prop}[thm]{Proposition}

\theoremstyle{definition}
\newtheorem*{rem}{Remark}
\begin{document}
\title[\null]{Chang's Conjectures and Easton collapses}
\author[\null]{Monroe Eskew and Masahiro Shioya}
\address{Kurt G\"odel Research Center,
University of Vienna,
Vienna, Austria.}
\email{monroe.eskew@univie.ac.at}
\address{Institute of Mathematics,
University of Tsukuba,
Tsukuba, 305-8571 Japan.}
\email{shioya@math.tsukuba.ac.jp}
\subjclass{03E05, 03E35, 03E55}
\begin{abstract}
Using Easton collapses,
we give a simplified construction of a model in which 
Chang's Conjecture for triples holds.
\end{abstract}
\thanks{The first author was supported by FWF Project P34603.}
\thanks{The second author was supported by 
JSPS Grant-in-Aid for Scientific Research (C) 21K03334}

\date{}
\maketitle

\section{Introduction}
Around 1960
Chang (see \cite{v}) raised a question about
two-cardinal versions of
the downward L\"owenheim--Skolem theorem.
Let
$\nu>\nu'$ and $\mu>\mu'$ be pairs of cardinals
with $\nu>\mu$ and $\nu'>\mu'$.
By
$(\nu,\nu')\twoheadrightarrow(\mu,\mu')$
we mean the following statement, where
structures are for a countable language
with a unary predicate $\mathtt{P}$:
\begin{quote}
Every structure $\mathcal{N}$ 
of size $\nu$ with $|\mathtt{P}^\mathcal{N}|=\nu'$ has 
an elementary substructure $\mathcal{M}$
of size $\mu$ with $|\mathtt{P}^\mathcal{M}|=\mu'$.
\end{quote}
Chang asked if 
$(\nu,\nu')\twoheadrightarrow(\omega_1,\omega)$ holds in general.
Each instance of 
$(\nu,\nu')\twoheadrightarrow(\mu,\mu')$
is now called Chang's Conjecture (for pairs),
for which we write CC.

CC has been approached from set theory
as a reflection principle.
Kunen~\cite{ku} constructed a model of
$(\omega_2,\omega_1)\twoheadrightarrow(\omega_1,\omega)$,
assuming a huge cardinal exists.
As it turned out, Kunen's method gives a model of
$(\mu^{+2},\mu^+)\twoheadrightarrow(\mu^+,\mu)$,
where $\mu$ is a given regular cardinal.
This contrasts with the constructions due to Silver (see \cite{km})
or Shelah~\cite{s}, which would work at most for 
$(\mu^{+},\mu)\twoheadrightarrow(\omega_1,\omega)$
with $\mu>\omega$ regular 
(see Foreman--Magidor~\cite{fm}).
A survey of Kunen's method can be found in \cite{ff}.

It is natural to consider the case of \emph{two} unary predicates,
say the consistency of
$(\omega_3,\omega_2,\omega_1)\twoheadrightarrow(\omega_2,\omega_1,\omega)$.
Note that this implies that of
$(\omega_3,\omega_2)\twoheadrightarrow(\omega_2,\omega_1)$ 
and thus calls for Kunen's method.
Indeed Foreman~\cite{f} constructed the desired model
by a thorough exploitation of the method:

\begin{sthm}[Foreman]
Suppose $\kappa$ is 2-huge.
Then there is a generic extension in which $\kappa=\omega_1$
and
$(\omega_3,\omega_2,\omega_1)\twoheadrightarrow(\omega_2,\omega_1,\omega)$
holds.
\end{sthm}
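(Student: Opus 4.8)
The plan is to obtain, in a collapse extension, a single elementary embedding that raises the triple $\omega_1,\omega_2,\omega_3$ by one index each, and then to read off the Chang's Conjecture instance by reflecting a substructure of the image back through the embedding. Fix $j\colon V\to M$ witnessing that $\kappa$ is $2$-huge, with $\mathrm{crit}(j)=\kappa$, and set $\kappa_1=j(\kappa)$ and $\kappa_2=j(\kappa_1)=j^2(\kappa)$, so that $\kappa<\kappa_1<\kappa_2$ and $M^{\kappa_2}\subseteq M$. Throughout write $j[X]=\{j(x):x\in X\}$ for the pointwise image. The aim is to force with an Easton collapse $\mathbb{P}$ so that in $V[G]$ one has $\kappa=\omega_1$, $\kappa_1=\omega_2$, $\kappa_2=\omega_3$, while $j$ lifts to an elementary $j\colon V[G]\to M[G^*]$ that retains enough of the closure $M^{\kappa_2}\subseteq M$.

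First I would take $\mathbb{P}$ to be an Easton-supported collapse assembled from three blocks, the first collapsing everything below $\kappa$ onto $\omega$, the second collapsing $[\kappa,\kappa_1)$ onto $\kappa$, and the third collapsing $[\kappa_1,\kappa_2)$ onto $\kappa_1$; a routine chain-condition and closure bookkeeping then makes $\kappa,\kappa_1,\kappa_2$ into $\omega_1,\omega_2,\omega_3$ in $V[G]$ and collapses nothing else. By elementarity $j(\mathbb{P})$ is the same recipe carried up to $j(\kappa_2)$, and since $\mathrm{crit}(j)=\kappa$ the embedding fixes the lowest block pointwise. The Easton support is chosen precisely so that $j(\mathbb{P})$ factors as a part absorbing $\mathbb{P}$ followed by a tail that is sufficiently closed in $V[G]$; I would then build a tail-generic inside $V[G]$ over $M[G]$ from the closure of $M$ together with a master condition lying below $j[G]$ on each collapsing block, yielding $G^*\supseteq j[G]$ generic for $j(\mathbb{P})$ over $M$ and a lift $j\colon V[G]\to M[G^*]$ with $j(\omega_1)=\omega_2$ and $j(\omega_2)=\omega_3$.

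With the lift in hand the reflection is clean. Let $\mathcal{N}$ have universe of size $\omega_3$ with unary predicates $\mathtt{P}_1,\mathtt{P}_2$ of sizes $\omega_2$ and $\omega_1$; rearranging by a bijection I may assume the universe is $\kappa_2$ and that $\mathtt{P}_1^{\mathcal{N}}=\kappa_1$ and $\mathtt{P}_2^{\mathcal{N}}=\kappa$ are initial segments. Then $j\restriction\mathcal{N}$ is an elementary map of $\mathcal{N}$ onto $j[\mathcal{N}]\prec j(\mathcal{N})$, and by the surviving closure $j[\mathcal{N}]\in M[G^*]$. Since $\mathtt{P}_1^{j(\mathcal{N})}=j(\kappa_1)=\kappa_2$ and $\mathtt{P}_2^{j(\mathcal{N})}=j(\kappa)=\kappa_1$, the traces compute to $j[\mathcal{N}]\cap\mathtt{P}_1^{j(\mathcal{N})}=j[\kappa_1]$ of size $\omega_2$ and $j[\mathcal{N}]\cap\mathtt{P}_2^{j(\mathcal{N})}=j[\kappa]=\kappa$ of size $\omega_1$, on a universe of size $\omega_3$. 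Hence $M[G^*]$ satisfies: $j(\mathcal{N})$ has an elementary substructure of size $j(\omega_2)$ whose trace on $\mathtt{P}_1^{j(\mathcal{N})}$ has size $j(\omega_1)$ and whose trace on $\mathtt{P}_2^{j(\mathcal{N})}$ has size strictly below $j(\omega_1)$.

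The last clause is the decisive point: the two upper sizes $\omega_3=j(\omega_2)$ and $\omega_2=j(\omega_1)$ lie in the range of $j$, whereas the critical value $\omega_1$ does not, so I record the bottom trace not as ``equal to $\omega_1$'' but as ``strictly below $j(\omega_1)$.'' Pulling the displayed statement back through $j$ then yields in $V[G]$ an elementary $\mathcal{M}\prec\mathcal{N}$ with $|\mathcal{M}|=\omega_2$, $|\mathcal{M}\cap\mathtt{P}_1^{\mathcal{N}}|=\omega_1$, and $|\mathcal{M}\cap\mathtt{P}_2^{\mathcal{N}}|<\omega_1$, i.e.\ countable; as $\mathtt{P}_2^{\mathcal{N}}$ is infinite this trace is exactly $\omega$, giving $(\omega_3,\omega_2,\omega_1)\twoheadrightarrow(\omega_2,\omega_1,\omega)$. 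I expect the middle step, not this reflection, to be the main obstacle: one must collapse three intervals and still lift $j$ while keeping $M[G^*]$ closed enough to contain the size-$\omega_3$ object $j[\mathcal{N}]$, and collapsing tends to destroy exactly such closure. The heart of the argument—and the place where the Easton collapse earns its keep—is arranging the supports and closure of the blocks of $\mathbb{P}$ and $j(\mathbb{P})$ so that the tail used to build $G^*$ is adequately closed and so that $M^{\kappa_2}\subseteq M$ survives into $V[G]$.
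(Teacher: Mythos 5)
Your endgame is fine and matches what the paper does in \S 7: lift $j$ to the extension, use the closure of $M$ (via $|j(\mathbb{P})|\le j^2(\kappa)$) to see that $j[\mathcal{N}]$ is in the target model, observe that its traces have sizes $j(\omega_2)$, $j(\omega_1)$, $j(\omega)$ respectively (the bottom trace is $j[\kappa]=\kappa$, which is countable in $M[G^*]$ because there $\omega_1=j(\kappa)$), and pull back by elementarity. But the step you defer as ``routine chain-condition and closure bookkeeping'' plus ``the Easton support is chosen precisely so that $j(\mathbb{P})$ factors as a part absorbing $\mathbb{P}$ followed by a sufficiently closed tail'' is the entire content of the theorem, and for the poset you actually propose it is not known to be achievable. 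You take $\mathbb{P}$ to be the three-block iteration $E(\omega,\kappa)*\dot{E}(\kappa,j(\kappa))*\dot{E}(j(\kappa),j^2(\kappa))$. The paper states explicitly (Remark, \S 4) that it is open whether this poset forces the Chang's Conjecture instance, and open whether there is a projection from $E(\omega,j(\kappa))*\dot{E}(j(\kappa),j^2(\kappa))$ onto it. The obstruction is that absorbing a two-step iteration into a single collapse via term forcing (Laver's lemma plus $T(R,\dot{E}(\alpha,\gamma))\simeq E(\alpha,\gamma)$) does not iterate: at the second level one needs a master condition for the middle collapse computed over the right intermediate model, and in the naive iteration the required coordinatewise union is not available where it is needed (the ``ghost coordinates'' problem). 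This is why the paper replaces the iteration by the $\star$-construction $P(\kappa)\star\dot{Q}(\kappa,j(\kappa))*\dot{E}(j(\kappa),j^2(\kappa))$, where $P(\gamma)=\prod^{\mathrm{E}}_{\xi\in\mathsf{M}\cap\gamma}E(\xi,\gamma)^{<\xi}$ and $\dot{Q}(\kappa,j(\kappa))$ carries a ${<}\kappa$-support product of copies of the middle collapse realized as term forcings over each $R(\xi,\kappa)$; the master condition is then assembled coordinatewise using Foreman's commuting-square lemma together with Lemmas 4, 9 and 10. None of this machinery, nor any substitute for it, appears in your sketch.

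A second, more standard gap: you propose to ``build a tail-generic inside $V[G]$ over $M[G]$ from the closure of $M$.'' This cannot work here, because the tail of $j(\mathbb{P})$ has $j(j^2(\kappa))$-many dense subsets in $M[G]$, far beyond the $j^2(\kappa)$-closure of $M$. One must instead force over $V$ with a poset of size $j^2(\kappa)$ that projects onto $\mathbb{P}$ and into which $j(\mathbb{P}')$ embeds for the relevant $\mathbb{P}'$ (in the paper, one forces with $P(j(\kappa))\star\dot{Q}(j(\kappa),j^2(\kappa))$ below a master condition and takes the final model to be the intermediate extension $V[G*H]$). So the proposal is not a proof: it correctly identifies where the difficulty lies but offers no mechanism for the projection and master-condition step, and the specific forcing it names is one the authors single out as not known to suffice.
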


The consistency of 
$(\omega_4,\omega_3,\omega_2,\omega_1)\twoheadrightarrow
(\omega_3,\omega_2,\omega_1,\omega)$
remains open.
According to Foreman~\cite[p.~1040]{ff}, 
there is a serious problem of ``ghost coordinates'' 
in extending his construction to the quadruple case.
Further information is not available probably because
the construction in the triple case was already quite complicated.

In this paper 
we give a simplified construction of a model of CC for triples.
The simplification 
allows us to state the problem of ``ghost coordinates''
in extending our construction (see Remark in \S6). 
In a sequel 
we plan to give a model in which
many instances of CC for triples below 
$\omega_\omega$ hold simultaneously.

The novel element of our approach
is an extensive use of projections associated with
term forcing, with which 
Easton collapses fit well (see \cite{sh}).
In contrast, Kunen's method involved the construction of
``universal collapses''  to get complete embeddings.
An analogue of Kunen's theorem (and much more) was proved 
in \cite{eh}:

\begin{sthm}[Eskew]
Suppose $\kappa$ is huge and $j:V\to M$ is a witness.
Let $\mu<\kappa$ be regular.
Then
$E(\mu,\kappa)*\dot{E}(\kappa,j(\kappa))$
forces $\kappa=\mu^+$ 
and
$(\mu^{+2},\mu^+)\twoheadrightarrow(\mu^+,\mu)$.
\end{sthm}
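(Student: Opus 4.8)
The plan is to read off the first conclusion immediately and to obtain the Chang's Conjecture by lifting $j$. Write $\lambda=j(\kappa)$, and recall that the Easton collapse $E(\mu,\kappa)$ is $<\mu$-closed and $\kappa$-c.c.\ and forces $\kappa=\mu^+$, after which $\dot E(\kappa,\lambda)$ forces $\lambda=\kappa^+=\mu^{+2}$; so if $G*H$ is the generic then $\kappa=\mu^+$ holds in $V[G*H]$, as claimed. For the rest I would build an elementary lift $\hat\jmath:V[G*H]\to M[\hat G*\hat H]$ extending $j$, with critical point $\kappa$, $\hat\jmath(\kappa)=\lambda$, and $\hat\jmath(\mu)=\mu$. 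Since $j(\mu)=\mu$, $j(\kappa)=\lambda$, and (as $M^\lambda\subseteq M$) the two models have the same subsets of $\lambda$, one computes $j(E(\mu,\kappa)*\dot E(\kappa,\lambda))=E(\mu,\lambda)*\dot E(\lambda,j(\lambda))$. The point to exploit is that the Easton collapse $E(\mu,\lambda)$ does \emph{not} split as $E(\mu,\kappa)*\dot E(\kappa,\lambda)$: its tail collapses $[\kappa,\lambda)$ all the way down to $\mu$ rather than to $\kappa$. Hence any $M$-generic $\hat G\subseteq E(\mu,\lambda)$ collapses $\kappa$ to size $\mu$, so that $\lambda=(\mu^+)^{M[\hat G]}$ while $|\kappa|^{M[\hat G]}=\mu$.

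Producing $\hat G$ is the heart of the matter and the step I expect to be the main obstacle. Here I would use the projections associated with term forcing, following \cite{sh}: although $E(\mu,\lambda)$ is not the two-step iteration, there is a projection from it (equivalently, from a sufficiently closed term-forcing factor) onto $E(\mu,\kappa)*\dot E(\kappa,\lambda)$, and this lets me thread the given $G*H$ up to an $M$-generic $\hat G\subseteq E(\mu,\lambda)$ with $j[G]\subseteq\hat G$. The closure of the Easton term forcing---better than that of the corresponding L\'evy collapse, which is exactly why Easton collapses ``fit well'' here---keeps this auxiliary step from adding the small substructures that will witness the conclusion, so that what is verified via the lift descends to $V[G*H]$. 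Over the second coordinate I would lift across $\dot E(\lambda,j(\lambda))$ by a standard argument: this forcing is $\le\lambda$-closed in $M[\hat G]$, so, using $M^\lambda\subseteq M$ to enumerate and meet its $M[\hat G]$-dense sets below the conditions $j(p)$ for $p\in H$, I build an $M[\hat G]$-generic $\hat H\supseteq j[H]$. The same closure secures that $\hat\jmath[\lambda]\in M[\hat G*\hat H]$.

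With $\hat\jmath$ in hand the Chang's Conjecture falls out by a reflection that turns the collapse of $\kappa$ to our advantage. Given a structure $\mathcal N$ of size $\mu^{+2}=\lambda$ with $|\mathtt P^{\mathcal N}|=\mu^+$, reindex so that its universe is $\lambda$ and $\mathtt P^{\mathcal N}=\kappa$. Then $\hat\jmath[\mathcal N]\prec\hat\jmath(\mathcal N)$ is an isomorphic copy of $\mathcal N$ lying in $M[\hat G*\hat H]$; its universe $\hat\jmath[\lambda]$ has order type $\lambda=(\mu^+)^{M[\hat G*\hat H]}$, while its predicate part is $\hat\jmath[\kappa]=\kappa$, which now has size $\mu$ in $M[\hat G*\hat H]$ because $\hat G$ collapsed $\kappa$. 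Thus $M[\hat G*\hat H]$ satisfies ``$\hat\jmath(\mathcal N)$ has an elementary substructure of size $\mu^+$ whose predicate part has size $\mu$.'' Since $\hat\jmath(\mu)=\mu$ and $\hat\jmath(\mu^+)=\hat\jmath(\kappa)=\lambda=(\mu^+)^{M[\hat G*\hat H]}$, this statement is precisely the $\hat\jmath$-image of ``$\mathcal N$ has an elementary substructure of size $\mu^+$ whose predicate part has size $\mu$.'' By elementarity the latter holds in $V[G*H]$, which is the instance $(\mu^{+2},\mu^+)\twoheadrightarrow(\mu^+,\mu)$ for $\mathcal N$; as $\mathcal N$ was arbitrary, this completes the plan.
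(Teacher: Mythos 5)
Your overall strategy is the one the paper itself relies on (via Lemma~4 and the proof of Theorem~1): lift $j$ through $E(\mu,\kappa)*\dot E(\kappa,\lambda)$ using a term-forcing projection from $j$ of that poset, observe that the image forcing collapses $\kappa$ to $\mu$, and reflect via $j[\lambda]$. Your final paragraph (the reflection) is correct. But there are two problems in the lifting, one of them a genuine gap. The smaller one: a projection $\pi\colon E(\mu,\lambda)\to E(\mu,\kappa)*\dot E(\kappa,\lambda)$ transfers generics \emph{downward}, so you cannot ``thread $G*H$ up'' to $\hat G$ inside $V[G*H]$; you must either force with the large poset below a master condition first and set $G*H=\pi(\hat G)$ (as in the proof of Theorem~1 here, followed by a density argument over arbitrary conditions), or force with the quotient over $V[G*H]$. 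This is repairable, and harmless for your conclusion since the descent to $V[G*H]$ is by elementarity of $\hat\jmath$ alone --- not, as you suggest, by any closure preventing the auxiliary forcing from adding witnesses.

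The serious gap is your construction of $\hat H$. The poset $E(\lambda,j(\lambda))^{M[\hat G]}$ is \emph{not} ${\le}\lambda$-closed: it only has lower bounds for linked sets of size $<\lambda$, and a descending $\lambda$-sequence can have a coordinate whose union is a function of size $\lambda$, hence not a condition. Worse, it has $2^{j(\lambda)}$-many dense subsets in $M[\hat G]$ (already $j(\lambda)$-many obvious ones), so no amount of closure lets you ``enumerate and meet'' them by a descending sequence; and $j[H]$ has size exactly $\lambda$, so even a single lower bound for $j[H]$ does not follow from directed closure. This step is precisely the content of Lemma~4 of the paper (extracted from \cite[Lemma~26]{eh}): one proves that the \emph{coordinatewise union} $q^*$ of $j[H]$ is a condition, using the Easton supports together with the $\lambda$-cc of the first stage to see that each coordinate is a union of only $<\lambda$ many compatible pieces, namely $q^*(\xi)=\bigcup\{j(q)(\xi):q\in H\cap E(\kappa,\delta)^{V[G]}\}$ for some $\delta<\lambda$; one also needs $j[H]\in M[\hat G]$, which uses $M^{\lambda}\subseteq M$ and $|E(\mu,\lambda)|\le\lambda$. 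One then obtains $\hat H$ as an honest $V[\hat G]$-generic filter containing the master condition $q^*$ by further forcing, not by diagonalization. Without this argument your lift across the second coordinate does not go through.
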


The main result of this paper is
\begin{thm}
Suppose $\kappa$ is 2-huge and $j:V\to M$ is a witness.
Let
$\mu<\kappa$ be regular.
Then 
$$\textstyle\prod\limits^\mathrm{E}_{\xi\in\mathsf{M}\cap\kappa}
E(\xi,\kappa)^{<\xi}\star
\left(\prod\limits^{<\kappa}_{\xi\in\mathsf{M}\cap \kappa}
\dot{E}(\kappa, j(\kappa))^{R(\xi, \kappa)}
\times\!\prod\limits^\mathrm{E}_{\xi\in\mathsf{M}\cap(\kappa,j(\kappa))}
\!\dot{E}(\xi,j(\kappa))\!\right)*
\dot{E}(j(\kappa), j^{2}(\kappa))$$
forces $j^{n}(\kappa)=\mu^{+(n+1)}$ for $n=0,1,2$
and
$(\mu^{+3},\mu^{+2},\mu^+)\twoheadrightarrow(\mu^{+2},\mu^+,\mu)$.
\end{thm}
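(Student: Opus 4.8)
The plan is to lift the $2$-huge embedding $j$ to the generic extension and then extract Chang's Conjecture for triples by a reflection argument one level above the pair case of the Eskew theorem. Write $\kappa_0=\kappa$, $\kappa_1=j(\kappa)$, $\kappa_2=j^2(\kappa)$, so that $M^{\kappa_2}\subseteq M$ and $j(\kappa_i)=\kappa_{i+1}$. First I would check, using the basic behaviour of the Easton collapse from \cite{sh}, that the three collapses occurring in the three layers force $\kappa_0=\mu^+$, $\kappa_1=\kappa_0^+$ and $\kappa_2=\kappa_1^+$, and that the Easton supports together with the $<\kappa$-support give each layer the chain condition and closure needed so that $\mu$ stays regular and no unintended cardinals are collapsed.

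The heart is to lift $j$ through the three layers; fix a generic $G_0\ast(G_1^{\mathrm t}\times G_1^{\mathrm E})\ast G_2$. Lifting through $\mathbb P_0=\prod^{\mathrm E}_{\xi\in\mathsf M\cap\kappa}E(\xi,\kappa)^{<\xi}$ requires an $M$-generic for $j(\mathbb P_0)=\prod^{\mathrm E}_{\xi\in\mathsf M\cap\kappa_1}E(\xi,\kappa_1)^{<\xi}$ containing $j''G_0$. On the coordinates $\xi<\kappa$ the quotient of $j(\mathbb P_0)$ by $j''G_0$ reduces, by the behaviour of the Easton collapse, to a product of copies of $\dot E(\kappa,\kappa_1)$ over the bases $R(\xi,\kappa)$, and these generics are furnished precisely by the term-forcing factor $\prod^{<\kappa}_{\xi\in\mathsf M\cap\kappa}\dot E(\kappa,\kappa_1)^{R(\xi,\kappa)}$, whose standard projection onto $R(\xi,\kappa)\ast\dot E(\kappa,\kappa_1)$ turns $G_1^{\mathrm t}$ into the required $M$-generic string. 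On the new coordinates $\kappa\le\xi<\kappa_1$ the generics come from the Easton factor $\prod^{\mathrm E}_{\xi\in\mathsf M\cap(\kappa,\kappa_1)}\dot E(\xi,\kappa_1)$. Lifting then continues through the rest of $\dot{\mathbb P}_1$ and finally through $\dot{\mathbb P}_2=\dot E(\kappa_1,\kappa_2)$, which supplies the new top coordinate of $j(\dot{\mathbb P}_1)$, producing $\hat j\colon V[G]\to M[\hat G]$ extending $j$ with $\hat j(\kappa_0)=\kappa_1$ and $\hat j(\kappa_1)=\kappa_2$. Verifying that these projections really yield an $M$-generic, and that $M[\hat G]$ retains enough $\kappa_2$-closure to contain the seed $\hat j''\kappa_2$ and the restriction $\hat j\restriction\kappa_2$, is one of the two main obstacles.

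Granting $\hat j$, let $\mathcal N$ be a structure on $\kappa_2$ with $|\mathtt P_0^{\mathcal N}|=\kappa_1$ and $|\mathtt P_1^{\mathcal N}|=\kappa_0$. The pointwise image $\hat j''\mathcal N$ is an elementary substructure of $\hat j(\mathcal N)$; by the closure of $M[\hat G]$ it lies in $M[\hat G]$, and its three sizes are $(\kappa_2,\kappa_1,\kappa_0)$. Since $\hat j$ carries the target triple $(\kappa_1,\kappa_0,\mu)$ to $(\kappa_2,\kappa_1,\mu)$, to prove $(\kappa_2,\kappa_1,\kappa_0)\twoheadrightarrow(\kappa_1,\kappa_0,\mu)$ in $V[G]$ it suffices, by elementarity of $\hat j$, to produce in $M[\hat G]$ an elementary $\mathcal B\prec\hat j(\mathcal N)$ of sizes $(\kappa_2,\kappa_1,\mu)$. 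The upper two coordinates of $\hat j''\mathcal N$ are already right; only the bottom must drop from $\kappa_0$ to $\mu$.

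That bottom coordinate equals $\hat j''\mathtt P_1^{\mathcal N}=\mathtt P_1^{\mathcal N}$, held pointwise fixed because $\mathtt P_1^{\mathcal N}\subseteq\kappa_0=\operatorname{crit}\hat j$; being the critical value $\kappa_0$ it is not in the range of $\hat j$ and does not reflect — the \emph{ghost coordinate}. To push it down to $\mu$ I would factor $j=j_1\circ j_0$ through the huge ultrapower $j_0$ at $\kappa_0$ (so $\operatorname{crit}j_0=\kappa_0$, $j_0(\kappa_0)=\kappa_1$), lift $j_0$ to $\hat j_0$, and run the pair-case reflection exactly as in the Eskew theorem, now with $\hat j_0$, to replace $\mathtt P_1^{\mathcal B}$ by a subset of size $\mu=\hat j_0(\mu)$. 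The obstacle — the second and more serious one — is that the $\hat j_0$-reflection also moves the upper coordinates, so the top reflection by $\hat j_1$ and the bottom reflection by $\hat j_0$ do not visibly commute; making them compatible, i.e.\ thinning $\mathtt P_1^{\mathcal B}$ to size $\mu$ uniformly over the seed while pinning the $\mathtt P_0$-part at size $\kappa_1$ and the universe at size $\kappa_2$, is exactly the ghost-coordinate problem. This is precisely what the form of $\dot{\mathbb P}_1$ — the interleaving of the term-forcing factor $\prod^{<\kappa}_{\xi}\dot E(\kappa,\kappa_1)^{R(\xi,\kappa)}$ with the Easton factor, aligning the $\hat j_0$- and $\hat j_1$-generics — is built to control; once it is, pulling $\mathcal B$ back along $\hat j$ delivers $\mathcal M\prec\mathcal N$ of sizes $(\kappa_1,\kappa_0,\mu)$ and finishes the proof.
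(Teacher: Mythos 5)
Your final step contains a genuine error that derails the proof. You claim that the three predicate sizes of $\hat j[\mathcal N]$ in $M[\hat G]$ are $(\kappa_2,\kappa_1,\kappa_0)$ and that the bottom coordinate must somehow be ``pushed down'' from $\kappa_0$ to $\mu$ by a second reflection through a factored embedding $j=j_1\circ j_0$. But $\kappa_0=\kappa$ is not a cardinal in $M[\hat G]$: the image forcing $j(P(\kappa))=P(j(\kappa))$ contains the factor $E(\mu,j(\kappa))$ and collapses every cardinal in $(\mu,j(\kappa))$ to $\mu$, so $|\hat j[\kappa_2]\cap\kappa_1|=|\kappa_0|=\mu=j(\mu)$ already. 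The witness is simply $j[j^2(\kappa)]$, whose traces on $j^2(\kappa)$ and $j(\kappa)$ have sizes $j(\kappa)$ and $\mu$ respectively in $M[\bar G*\bar H]$, and one application of elementarity of the lifted $j$ finishes the argument --- exactly as in Kunen's pair case, where the bottom coordinate drops to $\mu$ for free. Your proposed second reflection, with its attendant ``compatibility of the two reflections'' difficulty, is solving a problem that does not exist; you have also misplaced the ghost-coordinate obstruction, which the paper locates in the \emph{quadruple} case (in the construction of master conditions for coordinates $\gamma\in j(\kappa)\setminus\kappa$, where the needed generic filter lives only in a model in which $\gamma$ is no longer a cardinal), not in the triple case's reflection step.

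A secondary concern: you treat the lifting of $j$ as a matter of ``furnishing $M$-generics'' from the quotient of $j(\mathbb P_0)$ by $j''G_0$, which is the universal-collapse style of argument the paper deliberately avoids. The actual mechanism is to force with the larger poset $P(j(\kappa))\star\dot Q(j(\kappa),j^2(\kappa))$ below a master condition $(1,\dot s^*)$ for the projection $\pi_{\kappa j(\kappa)}\circ\mathrm{pr}$ and obtain the theorem's generic by projecting downward; constructing $\dot s^*$ coordinatewise (via Foreman's lemma, the commuting diagrams of Lemma~9, and the term-forcing isomorphisms) is the technical heart of the paper, and your proposal does not engage with it beyond acknowledging it as ``one of the two main obstacles.''
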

Here
$\mathsf{M}$ denotes the union of $\{\mu\}$ and the class 
of Mahlo cardinals $>\mu$.
The operation $\star$ and the posets $R(\xi, \kappa)$ are defined in 
\S\S4 and 5 respectively.
The second factor of the middle iterand 
plays a key role to get a master condition in \S6.

\section{Preliminaries}
Our notation is standard.
We refer the reader to \cite{ka}
for the background material.
Throughout the paper
$\mu$ denotes a regular cardinal.

CC can be viewed as an algebraic reflection principle.
Let
$\nu_{0}>\cdots>\nu_{n}$
and
$\mu_{0}>\cdots>\mu_{n}$
be $n+1$-tuples of cardinals (with $n>0$)
such that $\nu_{k}>\mu_{k}$ for every $k\le n$.
Then
$(\nu_{0},\cdots,\nu_{n})\twoheadrightarrow
(\mu_{0},\cdots,\mu_{n})$
iff for every
$f:\nu_{0}^{<\omega}\to\nu_{0}$ 
there is $x\subset\nu_{0}$
closed under $f$
such that $|x\cap\nu_{k}|=\mu_{k}$
for every $k\le n$.

We say that a cardinal $\gamma$ is strongly regular if
$|\gamma^{<\gamma}|= \gamma$.
Forcing with a poset $P$ does not change
the class of strongly regular cardinals $>|P|$.
Let $\mathsf{SR}$ denote the class of strongly regular cardinals (in $V$).

Let
$I$ be a set of ordinals.
Suppose
$Q_\xi$ is a poset for
$\xi\in I$.
For a regular cardinal $\alpha$
let
$\prod^{<\alpha}_{\xi\in I}Q_{\xi}$
denote the $<\!\alpha$-support product.
We write $Q^{<\alpha}$ for
$\prod^{<\alpha}_{\xi<\alpha}Q_{\xi}$
if $Q_{\xi}=Q$ for $\xi<\alpha$.
Similarly
$\prod\nolimits^\mathrm{E}_{\xi\in I}Q_{\xi}$
denotes the Easton support product, where 
a set $d$ of ordinals is Easton if
$\sup(d\cap\gamma)<\gamma$ for every regular $\gamma$.

Let
$S$ be a stationary subset of a regular cardinal
$\gamma>\omega$.
A poset $P$ is $S$-layered if 
$P=\bigcup_{\iota<\gamma}P_{\iota}$
for some increasing sequence $\langle P_{\iota}:{\iota<\gamma}\rangle$ 
of complete suborders of size $<\gamma$ such that
$S\cap C\subset\{\zeta<\gamma:P_{\zeta}=\bigcup_{\iota<\zeta}P_{\iota}\}$
for some club $C\subset \gamma$.
A poset is $\gamma$-cc if
it is $S$-layered for some stationary $S\subset \gamma$
(see \cite{fms}).
The following lemma is standard.

\begin{lem}
Suppose
$\alpha<\gamma$ are regular with $\gamma$ Mahlo.
Let
$S$ be a stationary set of regular cardinals $<\gamma$.
Assume $P_\xi$ is $S$-layered for every $\xi<\gamma$.
Then
$\prod^{<\alpha}_{\xi<\gamma}P_\xi$ and
$\prod^\mathrm{E}_{\xi<\gamma}P_\xi$ are $S$-layered.
\end{lem}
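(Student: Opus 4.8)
The plan is to exhibit an explicit filtration of the product and verify the four requirements of $S$-layeredness against it. Write each hypothesized layering as $P_\xi=\bigcup_{\iota<\gamma}P_{\xi,\iota}$, continuous on $S\cap C_\xi$ for a club $C_\xi\subseteq\gamma$, and let $R$ stand for either $\prod^{<\alpha}_{\xi<\gamma}P_\xi$ or $\prod^\mathrm{E}_{\xi<\gamma}P_\xi$. For $\iota<\gamma$ I would set
\[
R_\iota=\{\,p\in R:\ \mathrm{dom}(p)\subseteq\iota\ \text{and}\ p(\xi)\in P_{\xi,\iota}\ \text{for all}\ \xi\in\mathrm{dom}(p)\,\}.
\]
These are increasing in $\iota$, and $\bigcup_{\iota<\gamma}R_\iota=R$: any $p\in R$ has support bounded below $\gamma$ (by $<\!\alpha$-support with $\alpha<\gamma$ regular, or by the Easton condition at $\gamma$), and each value $p(\xi)$ lies in some $P_{\xi,\iota_\xi}$, so $p\in R_\iota$ once $\iota$ exceeds the support and $\sup_\xi\iota_\xi$. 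Each $R_\iota$ has size $<\gamma$ because $\gamma$ is inaccessible (being Mahlo): there are fewer than $\gamma$ admissible supports contained in $\iota$, and for each the number of value-assignments is at most $(\sup_{\xi<\iota}|P_{\xi,\iota}|)^{|\iota|}<\gamma$ by strong limitness and regularity of $\gamma$.

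Next I would check $R_\iota\lessdot R$ by the standard coordinatewise argument for products. Given $p\in R$, its reduction to $R_\iota$ is the condition with domain $\mathrm{dom}(p)\cap\iota$ whose value at $\xi$ is a reduction of $p(\xi)$ to $P_{\xi,\iota}$, which exists since $P_{\xi,\iota}\lessdot P_\xi$. Any extension of this reduction inside $R_\iota$ amalgamates with $p$ coordinate by coordinate, leaving the dropped coordinates $\xi\ge\iota$ of $p$ unchanged; absoluteness of incompatibility between $R_\iota$ and $R$ is verified the same way. This is routine but must be carried out with attention to the support convention, the point being that the amalgamated support is a union of two admissible supports, and one cites that $<\!\alpha$-supports (by regularity of $\alpha$) and Easton supports are closed under pairwise unions.

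The crux is the continuity clause: I must produce a single club $C$ with $R_\zeta=\bigcup_{\iota<\zeta}R_\iota$ for all $\zeta\in S\cap C$, even though the coordinate layerings are continuous only on the separate clubs $C_\xi$. Here I would use that $\gamma$ is regular uncountable, so that the diagonal intersection $C:=\triangle_{\xi<\gamma}C_\xi$, intersected with the tail $(\alpha,\gamma)$, is club, and for $\zeta\in C$ one has $\zeta\in C_\xi$ for every $\xi<\zeta$. Fixing $\zeta\in S\cap C$ and $p\in R_\zeta$, the support of $p$ is contained in and bounded below $\zeta$ (by $|\mathrm{dom}(p)|<\alpha\le\zeta$ with $\zeta$ regular, or by the Easton condition at the regular cardinal $\zeta$), and for each $\xi\in\mathrm{dom}(p)$ we have $\zeta\in S\cap C_\xi$, so continuity of the $\xi$-th layering gives $p(\xi)\in P_{\xi,\iota_\xi}$ with $\iota_\xi<\zeta$; since $\zeta$ is regular and $|\mathrm{dom}(p)|<\zeta$, the supremum of these $\iota_\xi$ together with the support bound stays below $\zeta$, placing $p$ in some $R_\iota$ with $\iota<\zeta$. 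This is the main obstacle, and the diagonal intersection is exactly what reconciles the per-coordinate clubs into one; inaccessibility of $\gamma$ keeps each $R_\iota$ small, while regularity of $\gamma$ underlies both the diagonal intersection and the closing-off suprema. The identical argument applies to both the $<\!\alpha$-support and the Easton-support product, differing only in the support bookkeeping, which completes the verification.
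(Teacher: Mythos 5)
Your proof is correct. The paper gives no proof of this lemma (it is simply declared ``standard''), and your argument --- the coordinatewise layering $R_\iota$, inaccessibility of $\gamma$ for the size bound, coordinatewise reductions for completeness, and the diagonal intersection of the clubs $C_\xi$ to handle the continuity clause --- is precisely the standard argument being referenced, with the relevant support bookkeeping for both the $<\!\alpha$-support and Easton cases done correctly.
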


Let $P$ and $R$ be posets.
Suppose
$\pi:P\to R$ is a projection, i.\,e. 
an order-preserving map such that
$\pi(1_P)=1_R$
and if
$r\le_R\pi(p)$, then 
$\pi(p')\le_R r$ for some $p'\le_P p$.
If $D$ is dense open in $R$, then 
$\pi^{-1}[D]$ is dense in $P$.
So if $G\subset P$ is $V$-generic, then
$\pi[G]$ generates a $V$-generic filter 
over $R$, which is denoted by $\pi(G)$.
Moreover $\mathrm{ran}~\pi$ is dense
in $R$ and the map
$r\mapsto\sup\{p\in P:\pi(p)\le r\}$
is a complete embedding of
$R$ into the completion of $P$.
It is easy to see that 
the composite or product of projections is a projection.
The canonical projections as in 
$R*(\dot{Q}\times\dot{Q}')\to R*\dot{Q}\to R$
are denoted by pr.

Suppose
$\dot{Q}$ is an $R$-name for a poset.
Then $T(R,\dot{Q})$ denotes the term forcing, i.\,e. 
the set 
(of representatives under the canonical identification from the class)
$\{\dot{q}\in V^R:R\Vdash\dot{q}\in \dot{Q}\}$
ordered by
$\dot{q}'\le\dot{q}\Leftrightarrow R\Vdash\dot{q}'\ \dot{\le}\ \dot{q}$.
Lemma~5 in \S4 is modeled on

\begin{llem}[Laver]
The identity map
$\mathrm{id}:R\times T(R,\dot{Q})\to R*\dot{Q}$
is a projection.
\end{llem}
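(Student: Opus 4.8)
The plan is to verify the three defining properties of a projection for $\mathrm{id}:R\times T(R,\dot{Q})\to R*\dot{Q}$ directly from the definitions. First I would observe that every condition $(r,\dot{q})$ of the product is genuinely a condition of the iteration: since $\dot{q}\in T(R,\dot{Q})$ means $R\Vdash\dot{q}\in\dot{Q}$, a fortiori $r\Vdash\dot{q}\in\dot{Q}$, so $(r,\dot{q})\in R*\dot{Q}$. The map is order-preserving because the product order is stronger in the second coordinate than the iteration order: ``$R\Vdash\dot{q}'\le\dot{q}$'' implies ``$r'\Vdash\dot{q}'\le\dot{q}$'', while the first coordinates are compared the same way. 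That $\mathrm{id}$ sends the top condition to the top condition is immediate, since $1_{T(R,\dot{Q})}$ represents the name for $1_{\dot{Q}}$.

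The substantive step is the projection property. Suppose $(r',\dot{q}')\le_{R*\dot{Q}}(r,\dot{q})$ with $(r,\dot{q})$ drawn from the product; unwinding this gives $r'\le_R r$ and $r'\Vdash\dot{q}'\le\dot{q}$. I would produce a condition in the product below $(r,\dot{q})$ that $\mathrm{id}$ carries below $(r',\dot{q}')$. The obstacle, and the crux of the argument, is that $\dot{q}'$ itself need not belong to the term forcing, since it is only forced into $\dot{Q}$ below $r'$ rather than by all of $R$. So I would mix: let $\dot{q}''$ be the $R$-name interpreted as $\dot{q}'$ in any generic containing $r'$ and as $\dot{q}$ otherwise, and take the candidate $(r',\dot{q}'')$.

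Then I would check three points. First, $\dot{q}''\in T(R,\dot{Q})$: below $r'$ it agrees with $\dot{q}'$, which $r'$ forces into $\dot{Q}$, and off $r'$ it agrees with $\dot{q}\in T(R,\dot{Q})$, so $R\Vdash\dot{q}''\in\dot{Q}$. Second, $(r',\dot{q}'')\le(r,\dot{q})$ in the product: $r'\le_R r$ holds, and $R\Vdash\dot{q}''\le\dot{q}$, because below $r'$ we invoke $r'\Vdash\dot{q}'\le\dot{q}$ while off $r'$ we have $\dot{q}''=\dot{q}$. Third, $(r',\dot{q}'')\le(r',\dot{q}')$ in the iteration: $r'\Vdash\dot{q}''=\dot{q}'\le\dot{q}'$ by the definition of the mixture below $r'$. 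These three facts together furnish a condition below $(r,\dot{q})$ in the product that $\mathrm{id}$ sends below $(r',\dot{q}')$ in the iteration, which is exactly the witness demanded by the projection property.

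The only delicate point is precisely the one that defeats the naive choice $\dot{q}''=\dot{q}'$: term-forcing conditions must be forced into $\dot{Q}$ by all of $R$, not merely below $r'$, and the mixing with $\dot{q}$ off $r'$ is what repairs this while leaving the behavior below $r'$ untouched. Everything else is a routine unwinding of the iteration and product orders.
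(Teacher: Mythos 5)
Your proof is correct and uses exactly the mixing argument that the paper employs when it proves the generalization of this lemma (Lemma~5 for the operation $\star$): the name $\dot{q}''$ defined to agree with $\dot{q}'$ below $r'$ and with $\dot{q}$ on conditions incompatible with $r'$ is precisely the paper's construction. The paper states Laver's lemma itself without proof, but your argument is the standard one it is modeled on, with all three required verifications carried out correctly.
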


Let $j:V\to M$ be an elementary embedding.
Suppose $\varphi:j(P)\to P$ is a projection.
We say that
$p\in j(P)$ is a master condition for 
$\varphi$ if $p'\le j( \varphi(p'))$ for every $p'\le p$ in $j(P)$.
Let
$K\subset j(P)$ be $V$-generic and contain a
master condition for $ \varphi$.
Then
$j[\varphi[K]]\subset K$, so that
$j[\varphi(K)]\subset K$.
Thus $j:V\to M$ extends to
$j:V[\varphi(K)]\to M[K]$ in $V[K]$.
Now let
$\pi:j(R)\to R*\dot{Q}$ be a projection.
Then 
$(1_{j(R)},\dot{q}^{*})$ is a master condition for
$\pi\circ\mathrm{pr}:j(R*\dot{Q})\to R*\dot{Q}$
iff $(\bar{r},\dot{q}^{*})\le j(\pi(\bar{r}))$ 
for every $\bar{r}\in j(R)$.

A cardinal $\kappa$ is $n$-huge if there 
is an elementary embedding $j:V\to M$ 
such that $\kappa$ is the critical point of $j$ and
$M^{j^{n}(\kappa)}\subset M$.
A huge cardinal is a 1-huge cardinal.

Suppose $M$ is an inner model of ZFC.
If $P\in M$ has size $\le\kappa$ in $V$, then
$M^{\kappa}\subset M$ in $V$ implies
$M[G]^{\kappa}\subset M[G]$ in $V[G]$ 
for every $V$-generic $G \subset P$.

\section{The Easton collapse}
In this section we recall basic properties of the Easton collapse.

Suppose $\alpha<\gamma$ are regular cardinals. 
The Easton collapse 
$E(\alpha, \gamma)$ is the poset
$\prod^\mathrm{E}_{\xi\in\mathsf{SR}\cap(\alpha, \gamma)}\xi^{<\alpha}$.
Every linked (i.\,e. pairwise compatible) subset of
$E(\alpha, \gamma)$ of size $<\alpha$ has a lower bound.
Assume further $\gamma$ is Mahlo.
Then
$E(\alpha, \gamma)$ is 
$\gamma$-cc and forces $\gamma = \alpha^{+}$.
In fact it is $S$-layered, where $S$ is the set of
regular cardinals $<\gamma$.

\begin{lem}
Assume $R$ is $\alpha$-cc and of size $\le\alpha$.
\begin{enumerate}
\item If $\dot Q_\xi$ is an $R$-name for a poset for $\xi \in (\alpha,\gamma)$, then 
$T\left(R,\prod^\mathrm{E}_{\xi\in (\alpha,\gamma)} \dot Q_\xi\right) \simeq \prod^\mathrm{E}_{\xi\in (\alpha,\gamma)} T(R,\dot Q_\xi).$
\item For every $\xi \in \mathsf{SR}\cap(\alpha, \gamma)$, $T(R,\dot{\xi^{<\alpha}}) \simeq \xi^{<\alpha}$.

\item Therefore,
$T(R,\dot{E}(\alpha, \gamma))\simeq E(\alpha, \gamma)$.
\end{enumerate}
\end{lem}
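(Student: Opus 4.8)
The plan is to read $\simeq$ as forcing equivalence (equality of Boolean completions), realized wherever possible by explicit dense embeddings or poset isomorphisms, and to prove the three parts in the stated order, obtaining (3) by substituting (2) into (1). Throughout I would use that $R$ has size $\le\alpha$ and is $\alpha$-cc, and that $\alpha<\xi<\gamma$ with $\xi$ strongly regular, so $\xi^{<\alpha}=\xi$.

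For (1) I would set up the obvious correspondence. Given $\dot q\in T(R,\prod^{\mathrm{E}}_{\xi}\dot Q_\xi)$, let $\dot q_\xi$ be a name with $R\Vdash\dot q_\xi=\dot q(\xi)$; then $\dot q_\xi\in T(R,\dot Q_\xi)$, and conversely an Easton-supported sequence $\langle\dot q_\xi\rangle$ names a function in $\prod^{\mathrm{E}}_{\xi}\dot Q_\xi$. These assignments are mutually inverse and order-preserving on the separative quotients, since the term order is read coordinatewise. The only nonformal point is that the $V$-support $e=\{\xi:R\not\Vdash\dot q_\xi=1\}$ is Easton. To check this, fix a regular $\rho\in(\alpha,\gamma]$ and let $\dot d$ be the support of $\dot q$. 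Below each $r\in R$ the name $\sup(\dot d\cap\rho)$ is decided by an antichain of size $<\alpha$ into ordinals $<\rho$, and since $\alpha<\rho=\mathrm{cf}(\rho)$ these are bounded by some $\beta_\rho^r<\rho$. As $|R|\le\alpha<\rho$, the bound $\sup_{r}\beta_\rho^r$ is still $<\rho$ and dominates $e\cap\rho$. Hence $\sup(e\cap\rho)<\rho$ for every regular $\rho$, so $e$ is Easton and $\langle\dot q_\xi\rangle$ lies in $\prod^{\mathrm{E}}_{\xi}T(R,\dot Q_\xi)$.

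For (2) I would first note that $T(R,\dot{\xi^{<\alpha}})$ is $<\alpha$-closed, inherited from the forced $<\alpha$-closure of $\xi^{<\alpha}$ by amalgamating the names of a decreasing sequence of length $<\alpha$. Next, by $\alpha$-cc every $\dot q\in T(R,\dot{\xi^{<\alpha}})$ is forced to have domain bounded by some $\beta<\alpha$ (the name $\sup(\mathrm{dom}\,\dot q)$ is forced $<\alpha$ and is decided by an antichain of size $<\alpha$ into ordinals below the regular $\alpha$), so $\dot q$ is equivalent to a nice name determined by $\beta$ together with $\beta$-many coordinate names for elements of $\xi\cup\{\ast\}$. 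Counting these using $\xi^{<\alpha}=\xi$ and $|R|^{<\alpha}\le\alpha^{<\alpha}\le\xi$ yields a dense subset of size $\le\xi$. Finally I must show that $T(R,\dot{\xi^{<\alpha}})$ itself collapses $\xi$ to $\alpha$: by Laver's Lemma the identity $R\times T(R,\dot{\xi^{<\alpha}})\to R*\dot{\xi^{<\alpha}}$ is a projection and the target collapses $\xi$ to $\alpha$, so the product does; but over the $T$-extension the residual forcing $R$ has size $\le\alpha$, hence preserves all cardinals in $(\alpha,\xi]$, so the collapse must already have occurred in the $T$-extension. A $<\alpha$-closed poset with a dense subset of size $\le\xi$ that collapses $\xi$ to $\alpha$ is forcing equivalent to the L\'evy collapse $\xi^{<\alpha}$ by the standard uniqueness theorem, giving (2).

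For (3) I would observe that, since $|R|\le\alpha$, forcing with $R$ fixes $\mathsf{SR}\cap(\alpha,\gamma)$, so $\dot E(\alpha,\gamma)$ is forced equal to $\prod^{\mathrm{E}}_{\xi\in\mathsf{SR}\cap(\alpha,\gamma)}\dot{\xi^{<\alpha}}$; then (1) gives $T(R,\dot E(\alpha,\gamma))\simeq\prod^{\mathrm{E}}_{\xi}T(R,\dot{\xi^{<\alpha}})$, and (2) replaces each factor by $\xi^{<\alpha}$, the resulting product being $E(\alpha,\gamma)$. The one thing to verify is that the coordinatewise equivalences of (2) combine through the Easton product; this is fine once they are taken as dense embeddings fixing trivial conditions, since an Easton product of dense embeddings is again a dense embedding. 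I expect the main obstacle to be precisely the collapsing step in (2): separating the contribution of $T(R,\dot{\xi^{<\alpha}})$ from that of $R$ so that the uniqueness characterization of the L\'evy collapse applies, together with pinning the dense subset at size exactly $\xi$.
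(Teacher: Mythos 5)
Your argument is essentially correct, but for the crucial part (2) it takes a genuinely different route from the one the paper relies on. The paper itself only cites \cite{sh} for the proof, but the displayed embedding $i$ immediately after the lemma shows the intended argument: since $R$ is $\alpha$-cc of size $\le\alpha$ and $\xi$ is strongly regular, there are exactly $\xi$ nice $R$-names for ordinals $<\xi$; enumerating them as $\langle\dot\eta(\iota):\iota<\xi\rangle$ one sends $q\in\xi^{<\alpha}$ to the term $\langle\dot\eta(q(\zeta)):\zeta\in\mathrm{dom}\,q\rangle$, and density follows from the same $\alpha$-cc domain-bounding you use. You instead verify that $T(R,\dot{\xi^{<\alpha}})$ is ${<}\alpha$-closed, has density $\le\xi$, and (via Laver's projection and the fact that the residual $R$ of size $\le\alpha$ cannot be responsible for collapsing $\xi$) collapses $\xi$ to $\alpha$ below every condition, and then invoke the folklore uniqueness of the L\'evy collapse under $\xi^{<\alpha}=\xi$. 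That is a valid proof of the lemma as literally stated (your support computations in (1) and the assembly in (3) are also fine, modulo routine care with separative quotients when applying the folklore theorem), and it is arguably more conceptual. What it does not buy you is the \emph{explicit} dense embedding based on a chosen list of names, which is what the rest of the paper actually uses: the projections $\pi_{\alpha\gamma}$, the coherence requirement $j(\dot\eta_{\alpha\kappa}\vert\kappa)=\dot\eta_{\alpha j(\kappa)}\vert j(\kappa)$ forcing $j(i_{\alpha\kappa})=i_{\alpha j(\kappa)}$, and the commuting diagrams in Lemma~9 all depend on $i$ having the concrete coordinatewise form $q\mapsto\langle\dot\eta(q(\xi)(\zeta))\rangle$. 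So while your abstract equivalence proves the lemma, you would still need to extract the constructive version to carry out the later sections.
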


See \cite{sh} for the proof.
The dense embedding
$i:q\in E(\alpha, \gamma) \mapsto\dot{q}\in T(R,\dot{E}(\alpha, \gamma))$
is defined by
$${R}\Vdash\dot{q}=\langle \langle
\dot{\eta}({q(\xi)(\zeta)}):\zeta\in\mathrm{dom}~q(\xi)\rangle: 
\xi\in\mathrm{dom}~q\rangle.$$
Here
$\langle\dot{\eta}(\iota): \iota <\gamma\rangle$
is an injective list
of $R$-names for ordinals $<\gamma$ such that
$\langle\dot{\eta}(\iota): \iota <\xi\rangle$ lists
$R$-names for ordinals $<\xi$ for every
$\xi\in\mathsf{SR}\cap(\alpha,\gamma)$.
We say that the dense embedding $i$ and 
the projection 
$\mathrm{id}\times i:
R\times E(\alpha, \gamma) \to R*\dot{E}(\alpha, \gamma)$
are based on the list
$\langle\dot{\eta}(\iota): \iota <\gamma\rangle$.

The following lemma is essentially proved in \cite[Lemma 26]{eh}.

\begin{lem}
Suppose $\kappa$ is huge and $j:V\to M$ is a witness.
Let 
$$\pi:j(R)\to R*\dot{E}(\kappa,j(\kappa))$$
be a projection, where $R$ is $\kappa$-cc and of size $\le\kappa$.
Assume $1_{j(R)}$ 
is a master condition for $\mathrm{pr}\circ\pi:j(R)\to R$.
Then there is a master condition
$(1_{j(R)},\dot{q}^{*})$
for 
$$\pi\circ\mathrm{pr}:j(R*\dot{E}(\kappa,j(\kappa)))\to 
R*\dot{E}(\kappa,j(\kappa)).$$
\end{lem}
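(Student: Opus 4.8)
The plan is to invoke the criterion recorded at the end of \S2: to build the required master condition it suffices to exhibit a single $j(R)$-name $\dot q^*$ for a condition in $E(j(\kappa),j^2(\kappa))=j(E(\kappa,j(\kappa)))$ such that $(\bar r,\dot q^*)\le j(\pi(\bar r))$ in $j(R*\dot E(\kappa,j(\kappa)))$ for every $\bar r\in j(R)$. Writing $\pi(\bar r)=(\rho(\bar r),\dot s(\bar r))$ with $\rho=\mathrm{pr}\circ\pi$, so that $j(\pi(\bar r))=(j(\rho(\bar r)),j(\dot s(\bar r)))$, this inequality splits coordinatewise. The first coordinate demands $\bar r\le j(\rho(\bar r))$ for all $\bar r\in j(R)$, which is exactly the hypothesis that $1_{j(R)}$ is a master condition for $\mathrm{pr}\circ\pi$; so that coordinate is free. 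The whole task thus reduces to finding $\dot q^*$ with $\bar r\Vdash_{j(R)}\dot q^*\le j(\dot s(\bar r))$ for every $\bar r\in j(R)$.

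First I would read this off a $V$-generic $K\subseteq j(R)$. Since $1_{j(R)}$ is a master condition for $\rho$, we have $j[G]\subseteq K$ for $G=\rho(K)$, the embedding lifts to $j\colon V[G]\to M[K]$, and $\pi(K)=G*g$ for an $E(\kappa,j(\kappa))$-generic $g$ over $V[G]$ whose elements $\dot s(\bar r)^G$ (for $\bar r\in K$, where $\bar r\le j(\rho(\bar r))$ forces $\rho(\bar r)\in G$) are cofinal in $g$. Evaluating, the requirement becomes that $\dot q^{*,K}$ be a lower bound in $E(j(\kappa),j^2(\kappa))$ of the family $\{j(\dot s(\bar r))^K:\bar r\in K\}$, which is cofinal in $j[g]$. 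As $g$ is a filter and $j$ is order-preserving, $j[g]$ is downward directed, so the canonical choice is to let $\dot q^*$ name the coordinatewise union $\bigcup j[g]$; provided this union is a genuine condition, directedness places it below every member of $j[g]$ and the desired inequality follows. That $\bigcup j[g]$ lies in $M[K]$ is where hugeness enters: $|j(R)|\le j(\kappa)$ and $j(R)$ is $j(\kappa)$-cc, so $M^{j(\kappa)}\subseteq M$ upgrades to $M[K]^{j(\kappa)}\subseteq M[K]$, and since $|j[g]|\le j(\kappa)$ both $j[g]$ and its union are captured by $M[K]$.

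The hard part will be checking that $\bigcup j[g]$ really is a condition of $E(j(\kappa),j^2(\kappa))$, namely that its support is Easton in $(j(\kappa),j^2(\kappa))$ and that each coordinate carries a partial function of size $<j(\kappa)$. The support is controlled because every $s\in g$ has Easton, hence (by regularity of $j(\kappa)$) bounded, support below $j(\kappa)$, so $\bigcup_{s\in g}\mathrm{supp}(j(s))$ stays below $\sup j[j(\kappa)]<j^2(\kappa)$, using that $j^2(\kappa)$ is regular in $M$ and $|j[j(\kappa)]|=j(\kappa)$. The genuinely delicate point is the per-coordinate size bound, and here it is convenient to organize the names through the term-forcing isomorphism $T(R,\dot E(\kappa,j(\kappa)))\simeq E(\kappa,j(\kappa))$, which gives the relevant names a fixed, generic-independent domain structure. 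At a coordinate $j(\zeta)$ the union is just $j$ applied pointwise to the generic collapsing map, with domain $j[\kappa]$ of size $\kappa$, causing no trouble; but at the remaining ``ghost'' coordinates $\xi$ --- those not of the form $j(\zeta)$ --- a single $j(s)(\xi)$ may already have domain of size $\ge\kappa$ (though still $<j(\kappa)$), and one must show the union of these over $s\in g$ still has size $<j(\kappa)$. I expect this to be the main obstacle: it is resolved by combining the $j(\kappa)$-chain condition of $j(R)$, the closure of $M$, and the precise combinatorics of the Easton collapse, and it is precisely the step --- imported here from \cite[Lemma 26]{eh} --- that foreshadows the ghost-coordinate difficulty taken up in \S6.
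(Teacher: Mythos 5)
Your overall strategy coincides with the paper's: reduce to the criterion $(\bar r,\dot q^*)\le j(\pi(\bar r))$ for all $\bar r\in j(R)$, let $\dot q^*$ name the coordinatewise union of $j[H]$ where $G*H=\pi(K)$, use $|j(R)|\le j(\kappa)$ together with $M^{j(\kappa)}\subset M$ to get $j[H]\in M[K]$, and derive the master-condition inequality from the fact that $q^*$ extends every member of $j[H]$. However, the step you yourself flag as ``the main obstacle'' --- that $q^*(\xi)$ has size $<j(\kappa)$ at every coordinate $\xi$, in particular at the ghost coordinates $\xi\in j(d)\setminus j[d]$ --- is exactly the substance of the lemma and is left unproved; moreover the ingredients you guess will close it (the $j(\kappa)$-cc of $j(R)$ plus unspecified Easton combinatorics) are not the ones that do the work.

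The missing argument is short. An Easton subset $d$ of $\mathsf{SR}\cap(\kappa,j(\kappa))$ in $V[G]$ is bounded below the regular cardinal $j(\kappa)$, so every $\xi\in\mathrm{dom}~q^*$ lies below $j(\delta)$ for some $\delta<j(\kappa)$. If $\xi<j(\delta)$, then a condition $q\in H$ contributes to coordinate $\xi$ only through its restriction $q\restriction\delta$, whence $q^*(\xi)=\bigcup\{j(p)(\xi):p\in H\cap E(\kappa,\delta)^{V[G]}\}$. Since $j(\kappa)$ remains inaccessible in $V[G]$ (as $|R|\le\kappa$), $|E(\kappa,\delta)^{V[G]}|<j(\kappa)$, so $q^*(\xi)$ is a union of fewer than $j(\kappa)$ pairwise compatible partial functions each of size $<j(\kappa)$, and therefore lies in $\xi^{<j(\kappa)}$ by regularity of $j(\kappa)$. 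The same bounding observation is what shows $\mathrm{dom}~q^*$ is genuinely Easton, not merely bounded below $j^2(\kappa)$ as you check. Finally, the ghost-coordinate difficulty discussed in the Remark of \S 6 is of a different nature (the filter needed to compute a coordinate is not available in the right intermediate model); in the present lemma the ghosts are rendered harmless by the restriction argument above.
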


\begin{proof}
We claim that 
$${j(R)}\Vdash\dot{q}^{*}=\text{the coordinatewise union of }
\{j(\dot{q}):\exists \bar{r}\in\dot{K}\exists r(\pi(\bar{r})=(r,\dot{q}))\}$$
works, where
$\dot{K}$ is the canonical $j(R)$-name for a generic filter.

To see that
${j(R)}\Vdash\dot{q}^{*}\in
j(\dot{E}(\kappa,j(\kappa)))$, let
$K\subset j(R)$ be $V$-generic.
Work in $V[K]$.
Let
$G*H=\pi(K)\subset R*\dot{E}(\kappa,j(\kappa))$,
which is $V$-generic.
Since $1_{j(R)}$ 
is a master condition for $\mathrm{pr}\circ\pi:j(R)\to R$,
$j:V\to M$ extends to $j:V[G]\to M[K]$.
Note that 
$j[H]\in [M[K]]^{j(\kappa)}\subset M[K]$
because $|j(R)|\le j(\kappa)$ in $V$.
Moreover
$j[H]$ is a directed subset of
$j(\dot{E}(\kappa,j(\kappa)))_{K}=E(j(\kappa),j^{2}(\kappa))^{M[K]}$.

Let
${q}^{*}=\dot{q}^{*}_{K}$, which is the coordinatewise union of $j[H]$.
Then
${q}^{*}\in M[K]$ is a map from
$$\textstyle\bigcup\{j(d):d \subset\mathsf{SR}\cap (\kappa,j(\kappa))
\text{ is Easton in }V[G]\},$$ 
which is an Easton subset of 
$\mathsf{SR}\cap(j(\kappa),j^{2}(\kappa))$.
To see that
${q}^{*}\in E(j(\kappa),j^{2}(\kappa))^{M[K]}$,
let
$\xi\in\mathrm{dom}~{q}^{*}$.
Then
${q}^{*}(\xi)=\bigcup\{j(q)(\xi):q\in H\cap E(\kappa,\delta)^{V[G]}\}$
for some $\delta<j(\kappa)$.
Thus
${q}^{*}(\xi)\in\xi^{<j(\kappa)}$, as desired.

It remains to prove that
$(1_{j(R)},\dot{q}^{*})$
is a master condition for 
$\pi\circ\mathrm{pr}$.
Let
$\bar{r}\in j(R)$ and
$\pi(\bar{r})=(r, \dot{q})$.
It suffices to prove that 
$(\bar{r},\dot{q}^{*})\le (j(r),j(\dot{q}))$
in $j(R*\dot{E}(\kappa,j(\kappa)))$.
By our assumption we have
$\bar{r}\le j(r)$
in $j(R)$.
To see that
$\bar{r}\Vdash\dot{q}^{*}\supset j(\dot{q})$, let
$K\subset {j(R)}$ be $V$-generic with $\bar{r}\in K$.
Then we have
$j(\dot{q})_{K}\subset\dot{q}^{*}_{K}$, 
as desired.
\end{proof}

\section{Iteration with product}
In this section 
we introduce the operation $\star$,
which involves iteration and product.

Let $I$ and $J$ be sets of ordinals.
Suppose
$\pi_\xi:P\to R_\xi$ 
is a projection and
$\dot{Q}_\xi$
is an $R_\xi$-name for a poset
for
$\xi\in I\cup J$.
For a regular cardinal $\alpha$ define
$$P\star\left(\textstyle\prod^{<\alpha}_{\xi\in I}\dot{Q}_\xi
\times\prod\nolimits^\mathrm{E}_{\xi\in J}\dot{Q}_\xi\right)$$
to be the set 
$P\times
\prod\nolimits^{<\alpha}_{\xi\in I}T(R_\xi,\dot{Q}_\xi)
\times
\prod\nolimits^\mathrm{E}_{\xi\in J}T(R_\xi,\dot{Q}_\xi)$
ordered by
\begin{align*}
(p', \dot{q}', \dot{r}')\le(p, \dot{q}, \dot{r})\Leftrightarrow\
& p'\le_Pp,\
\mathrm{dom}~\dot{q}'\supset \mathrm{dom}~\dot{q},\
\mathrm{dom}~\dot{r}'\supset \mathrm{dom}~\dot{r},\\
& \pi_\xi(p')\Vdash_\xi
\dot{q}'(\xi)\ \dot{\le}_\xi\  \dot{q}(\xi)
\text{ for every }\xi\in \mathrm{dom}~\dot{q}\text{ and }\\
& \pi_\xi(p')\Vdash_\xi
\dot{r}'(\xi)\ \dot{\le}_\xi\  \dot{r}(\xi)
\text{ for every }\xi\in \mathrm{dom}~\dot{r}.
\end{align*}
Here
$\Vdash_\xi$ 
denotes the forcing relation associated with 
$R_\xi$.
Note that $\dot{q}$ (say) denotes a sequence of names
rather than a name for a sequence.
The definition is justified by

\begin{lem}
The identity map
$$\mathrm{id}:P\times
\textstyle\prod^{<\alpha}_{\xi\in I}T(R_\xi,\dot{Q}_\xi)
\times
\prod\nolimits^\mathrm{E}_{\xi\in J}T(R_\xi,\dot{Q}_\xi)\to 
P\star\left(\prod\nolimits^{<\alpha}_{\xi\in I}\dot{Q}_\xi
\times\prod\nolimits^\mathrm{E}_{\xi\in J}\dot{Q}_\xi\right)$$
is a projection. 
\end{lem}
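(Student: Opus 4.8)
The plan is to carry out Laver's Lemma coordinatewise. Since the map is the identity on a fixed underlying set, the only thing separating the two posets is the ordering: the $\star$-ordering weakens each product requirement $R_\xi\Vdash_\xi\dot{q}'(\xi)\ \dot{\le}_\xi\ \dot{q}(\xi)$ to the requirement $\pi_\xi(p')\Vdash_\xi\dot{q}'(\xi)\ \dot{\le}_\xi\ \dot{q}(\xi)$ (and likewise for the $\dot{r}$-coordinates), and similarly for the $\dot{r}$-coordinates. Hence any decrease in the product ordering is a decrease in the $\star$-ordering, so $\mathrm{id}$ is order-preserving, and it clearly sends the common top element to itself. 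The substance is the projection property.

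So I would fix conditions $a=(p_0,\dot{q}_0,\dot{r}_0)$ and $b=(p,\dot{q},\dot{r})$ with $b\le_\star a$, and produce $a'\le a$ in the product lying $\le_\star b$. I would keep the first coordinate, setting $p'=p$; since $b\le_\star a$ gives $p\le_P p_0$, this is below both $p_0$ and $p$. For the term-forcing coordinates I would amalgamate $\dot{q}$ and $\dot{q}_0$ by mixing over the condition $\pi_\xi(p)$: for each $\xi\in\mathrm{dom}~\dot{q}$ let $\dot{q}'(\xi)$ be the $R_\xi$-name forced to equal $\dot{q}(\xi)$ below $\pi_\xi(p)$ and to equal $\dot{q}_0(\xi)$ on the complement of $\pi_\xi(p)$ (using that $\mathrm{dom}~\dot{q}\supset\mathrm{dom}~\dot{q}_0$ by $b\le_\star a$, and simply taking $\dot{q}'(\xi)=\dot{q}(\xi)$ when $\xi\notin\mathrm{dom}~\dot{q}_0$). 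I would treat $\dot{r}'$ over $\mathrm{dom}~\dot{r}$ in the same way.

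Then I would verify both orderings. Because $b\le_\star a$ supplies $\pi_\xi(p)\Vdash_\xi\dot{q}(\xi)\ \dot{\le}_\xi\ \dot{q}_0(\xi)$, below $\pi_\xi(p)$ we have $\dot{q}'(\xi)=\dot{q}(\xi)\ \dot{\le}_\xi\ \dot{q}_0(\xi)$, while off $\pi_\xi(p)$ we have $\dot{q}'(\xi)=\dot{q}_0(\xi)$; hence $R_\xi\Vdash_\xi\dot{q}'(\xi)\ \dot{\le}_\xi\ \dot{q}_0(\xi)$, the product requirement for $a'\le a$. On the other hand $\pi_\xi(p)\Vdash_\xi\dot{q}'(\xi)=\dot{q}(\xi)$, so $\pi_\xi(p')=\pi_\xi(p)\Vdash_\xi\dot{q}'(\xi)\ \dot{\le}_\xi\ \dot{q}(\xi)$, the $\star$-requirement for $a'\le_\star b$. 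The same computation handles $\dot{r}'$, and mixing preserves $R_\xi\Vdash_\xi\dot{q}'(\xi)\ \dot{\in}\ \dot{Q}_\xi$, so $\dot{q}'(\xi)\in T(R_\xi,\dot{Q}_\xi)$. The supports are unchanged, with $\mathrm{dom}~\dot{q}'=\mathrm{dom}~\dot{q}$ of size $<\alpha$ and $\mathrm{dom}~\dot{r}'=\mathrm{dom}~\dot{r}$ Easton, so $a'=(p,\dot{q}',\dot{r}')$ is a genuine condition.

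I expect no serious obstacle. The only point needing care is the name-mixing, where I must check that the weaker $\star$-side inequality holding below $\pi_\xi(p)$ upgrades to the full $R_\xi$-side inequality against $\dot{q}_0(\xi)$ once the generic is split along $\pi_\xi(p)$ and its complement. Everything else (order preservation, preservation of supports, and the image of the top element) is formal, and the single-coordinate instance $P=R$, $\pi=\mathrm{id}$, $|I|=|J|=1$ of this argument is exactly Laver's Lemma recalled above.
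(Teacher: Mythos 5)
Your proof is correct and is essentially the paper's own argument: keep the first coordinate of the lower $\star$-condition and amalgamate each term-forcing coordinate by mixing the two names over $\pi_\xi(p)$ and its incompatible conditions, then verify both orderings coordinatewise. The only (immaterial) difference is that off $\mathrm{dom}~\dot{q}_0$ you set $\dot{q}'(\xi)=\dot{q}(\xi)$ directly, whereas the paper mixes against the default value $\dot{1}_\xi$.
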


\begin{proof}
Suppose
$(p',\dot{q}',\dot{r}')\le(p,\dot{q},\dot{r})$ in
$P\star\left(\prod\nolimits^{<\alpha}_{\xi\in I}\dot{Q}_\xi
\times\prod\nolimits^\mathrm{E}_{\xi\in J}\dot{Q}_\xi\right)$.
It suffices to give
$\dot{q}''\le \dot{q}$ in
$\prod\nolimits^{<\alpha}_{\xi\in I}T(R_\xi,\dot{Q}_\xi)$
and
$\dot{r}''\le \dot{r}$ in
$\prod\nolimits^\mathrm{E}_{\xi\in J}T(R_\xi,\dot{Q}_\xi)$
such that
$$(p',\dot{q}'',\dot{r}'')\le(p',\dot{q}',\dot{r}')\text{ in }
P\star\left(\textstyle\prod^{<\alpha}_{\xi\in I}\dot{Q}_\xi
\times\prod^\mathrm{E}_{\xi\in J}\dot{Q}_\xi\right).$$

Let
$\mathrm{dom}~\dot{q}''=\mathrm{dom}~\dot{q}'$.
For $\xi\in\mathrm{dom}~\dot{q}''$
define
$\dot{q}''(\xi)\in T(R_\xi,\dot{Q}_\xi)$
by
\begin{itemize}
\item
$\pi_\xi(p')\Vdash_\xi \dot{q}''(\xi)= \dot{q}'(\xi)$ and
\item
$r\Vdash_\xi \dot{q}''(\xi)= \dot{q}(\xi)$
for every $r\ \bot_\xi\ \pi_\xi(p')$.
\end{itemize}
(It is understood that
$\Vdash_\xi \dot{q}(\xi)=\dot{1}_\xi$ 
unless $\xi\in \mathrm{dom}~\dot{q}$.)
The definition of $\dot{r}''$ is similar.
It is easy to check that
$\dot{q}''$ and $\dot{r}''$ are as desired.
\end{proof}

We allow $I$ or $J$ to be empty,
in which case we write the poset accordingly.
In particular, if ${\pi}:P\to{R}$ is a projection and 
$\dot{Q}$ is an $R$-name for a poset,
then we define
$P\star\dot{Q}$ suitably and get a projection
${\pi}\times\mathrm{id}:P \star\dot{Q}\to R*\dot{Q}$.

From Lemmas 3 and 5 we get the following result,
which is useful in \S5.
\begin{prop}
Suppose $\alpha<\gamma$ are from $\mathsf{M}-\{\mu\}$.
For $\xi\in\mathsf{M}\cap\alpha$
let
$\pi_{\xi} : P \to R_{\xi}$ be a projection between 
$\alpha$-cc posets of size $\le\alpha$.
Then there is a projection
\begin{align*} 
\mathrm{id} \times \sigma \times \tau: &\ P\times 
\textstyle\prod^{<\alpha}_{\xi\in\mathsf{M}\cap\alpha}{E}(\alpha,\gamma)\times
\textstyle\prod^\mathrm{E}_{\xi\in\mathsf{M}\cap(\alpha,\gamma)}
E(\xi,\gamma)\\
\to &\  P\star
\left(\textstyle\prod^{<\alpha}_{\xi\in\mathsf{M}\cap\alpha}
\dot{E}(\alpha,\gamma)^{R_{\xi}}
\times\textstyle\prod^\mathrm{E}_{\xi\in\mathsf{M}\cap(\alpha,\gamma)}
\dot{E}(\xi,\gamma)\right).
\end{align*} 
\end{prop}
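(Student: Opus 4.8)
The plan is to assemble the required projection from the coordinatewise dense embeddings supplied by Lemma~3 together with the identity projection of Lemma~5, using only the facts that dense embeddings are projections and that composites and products of projections are projections. Thus I would build $\sigma$ and $\tau$ factor by factor, check that they are support-preserving projections, and then compose with Lemma~5.

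First I would fix the coordinate maps. For $\xi\in\mathsf{M}\cap\alpha$ the poset $R_\xi$ is $\alpha$-cc of size $\le\alpha$, so Lemma~3(3) supplies a dense embedding $i_\xi:E(\alpha,\gamma)\to T(R_\xi,\dot{E}(\alpha,\gamma))$. For $\xi\in\mathsf{M}\cap(\alpha,\gamma)$ the poset $R_\xi$ has size $\le\alpha<\xi$, hence is in particular $\xi$-cc of size $\le\xi$; Lemma~3(3) with $\xi$ in place of $\alpha$ then supplies a dense embedding $i'_\xi:E(\xi,\gamma)\to T(R_\xi,\dot{E}(\xi,\gamma))$. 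Each of these dense embeddings is a projection: given $r\le i_\xi(q)$, density yields $q''$ with $i_\xi(q'')\le r\le i_\xi(q)$, so $q''$ is compatible with $q$ because $i_\xi$ preserves incompatibility, and any common lower bound $q'\le q$ satisfies $i_\xi(q')\le r$. Moreover each $i_\xi$ and each $i'_\xi$ sends $1$ to $1$ by the explicit formula for the dense embedding, so it does not enlarge the support of a condition.

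Next I would form the coordinatewise products $\sigma=\prod^{<\alpha}_{\xi\in\mathsf{M}\cap\alpha}i_\xi$ and $\tau=\prod^{\mathrm{E}}_{\xi\in\mathsf{M}\cap(\alpha,\gamma)}i'_\xi$. Since each factor is a projection that fixes $1$, and hence preserves the support type of a condition, $\sigma$ is a projection between the two $<\alpha$-support products and $\tau$ is a projection between the two Easton-support products; here I invoke that a product of projections is a projection. Together with the trivial projection $\mathrm{id}:P\to P$, the map $\mathrm{id}\times\sigma\times\tau$ is then a projection onto $P\times\prod^{<\alpha}_{\xi}T(R_\xi,\dot{E}(\alpha,\gamma))\times\prod^{\mathrm{E}}_{\xi}T(R_\xi,\dot{E}(\xi,\gamma))$ taken with the \emph{product} ordering.

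Finally, by Lemma~5 the identity map from this product-ordered poset to the same underlying set carrying the $\star$-ordering, namely $P\star(\prod^{<\alpha}_{\xi\in\mathsf{M}\cap\alpha}\dot{E}(\alpha,\gamma)^{R_\xi}\times\prod^{\mathrm{E}}_{\xi\in\mathsf{M}\cap(\alpha,\gamma)}\dot{E}(\xi,\gamma))$, is a projection. Composing it with $\mathrm{id}\times\sigma\times\tau$, and using that a composite of projections is a projection, yields the desired projection. The one step demanding genuine care is the support bookkeeping in the middle: one must verify that $\sigma$ and $\tau$ really land in the $<\alpha$-support and Easton-support products, so that the product-of-projections fact applies; but this is immediate from $i_\xi(1)=1$ and $i'_\xi(1)=1$, so I expect no real obstacle beyond this routine verification.
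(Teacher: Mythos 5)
Your proof is correct and follows essentially the route the paper intends (the paper gives no separate proof, deriving the Proposition directly ``from Lemmas 3 and 5''): coordinatewise dense embeddings from Lemma~3, assembled into support-respecting product projections, composed with the identity projection of Lemma~5. One small reading point: for $\xi\in\mathsf{M}\cap(\alpha,\gamma)$ the statement provides no $R_\xi$ --- the name $\dot{E}(\xi,\gamma)$ is a $P$-name (as in the definition of $\dot{Q}(\alpha,\gamma)$ in \S5), so your $i'_\xi$ should land in $T(P,\dot{E}(\xi,\gamma))$; since $P$ is $\alpha$-cc of size $\le\alpha$, hence $\xi$-cc of size $\le\xi$, your argument goes through verbatim.
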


\begin{rem}
Assume as in Theorem~1.
We do not know if 
$$E(\mu,\kappa)*\dot{E}(\kappa,j(\kappa))*\dot{E}(j(\kappa),j^{2}(\kappa))
\Vdash(\mu^{+3},\mu^{+2},\mu^+)\twoheadrightarrow(\mu^{+2},\mu^+,\mu),$$
or if there is a projection 
$$E(\mu,j(\kappa))*\dot{E}(j(\kappa),j^{2}(\kappa)) \to
E(\mu,\kappa)*\dot{E}(\kappa,j(\kappa))*\dot{E}(j(\kappa),j^{2}(\kappa)).$$
We do know, however, that there is a projection 
$$E(\mu,j(\kappa))\star
\dot{E}(j(\kappa),j^{2}(\kappa))^{E(\mu,\kappa)*\dot{E}(\kappa,j(\kappa))}\to
E(\mu,\kappa)*\dot{E}(\kappa,j(\kappa))*\dot{E}(j(\kappa),j^{2}(\kappa)),$$
which motivated the introduction of $\star$.
\end{rem}

\section{The main forcing}
In this section 
we construct posets and projections 
$\pi_{\alpha\gamma}:P(\gamma)\to R(\alpha,\gamma)$ for pairs of
$\alpha<\gamma$ from $\mathsf{M}$.

For $\gamma\in\mathsf{M}-\{\mu\}$ define
$$P(\gamma)=
\textstyle\prod^\mathrm{E}_{\xi\in\mathsf{M}\cap\gamma}
E(\xi,\gamma)^{<\xi}.$$ 
It is easy to see that
$P(\gamma)\subset V_\gamma$ 
is $\mu$-closed.
Since
$P(\gamma)$ has a factor $E(\mu,\gamma)$,
$P(\gamma) \Vdash\gamma=\mu^{+}$
by

\begin{lem}
$P(\gamma)$ is $\gamma$-cc.
\end{lem}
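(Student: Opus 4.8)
The plan is to exhibit a single stationary set $S\subseteq\gamma$ — namely the set $S$ of regular cardinals below $\gamma$, which is stationary since $\gamma$ is Mahlo — and to show that $P(\gamma)$ is $S$-layered; being $\gamma$-cc then follows by definition. The whole argument is two applications of Lemma~1, in each of which I keep the upper index equal to $\gamma$ (so that I always work with the \emph{same} Mahlo cardinal and the \emph{same} $S$), padding the index set with trivial factors whenever it is a proper subset of $\gamma$.

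First I would record a uniform base case. For each $\xi\in\mathsf{M}\cap\gamma$ the cardinal $\xi$ is regular (it is either $\mu$ or a Mahlo cardinal) and $\xi<\gamma$, so by the properties of the Easton collapse recalled in \S3 the poset $E(\xi,\gamma)$ is $S$-layered for this fixed $S$. The crucial feature is that $S$ does not depend on $\xi$: every $E(\xi,\gamma)$ is layered along the \emph{same} stationary set of regular cardinals below $\gamma$, which is exactly what lets me feed these posets simultaneously into Lemma~1. I would also note that the trivial poset is $S$-layered, layering it by constant stages.

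Next I would treat the inner factors $E(\xi,\gamma)^{<\xi}=\prod^{<\xi}_{\zeta<\xi}E(\xi,\gamma)$. Applying Lemma~1 with upper index $\xi$ fails, since the support bound then equals the index rather than lying strictly below it (and for $\xi=\mu$ the cardinal $\xi$ is not even Mahlo). Instead I would rewrite the product as $\prod^{<\xi}_{\zeta<\gamma}P_\zeta$, where $P_\zeta=E(\xi,\gamma)$ for $\zeta<\xi$ and $P_\zeta$ is trivial for $\xi\le\zeta<\gamma$; the trivial coordinates contribute nothing, so this is the same poset. Now the support bound is $\alpha=\xi$, which satisfies $\alpha<\gamma$ with $\gamma$ Mahlo, and every $P_\zeta$ is $S$-layered by the previous step. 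The product clause of Lemma~1 then gives that $E(\xi,\gamma)^{<\xi}$ is $S$-layered.

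Finally I would assemble $P(\gamma)=\prod^\mathrm{E}_{\xi\in\mathsf{M}\cap\gamma}E(\xi,\gamma)^{<\xi}$ by the same padding device, viewing it as $\prod^\mathrm{E}_{\xi<\gamma}P'_\xi$ with $P'_\xi=E(\xi,\gamma)^{<\xi}$ for $\xi\in\mathsf{M}\cap\gamma$ and $P'_\xi$ trivial otherwise. Each $P'_\xi$ is $S$-layered by the preceding paragraph, so the Easton-product clause of Lemma~1 yields that $P(\gamma)$ is $S$-layered, hence $\gamma$-cc. None of the steps is genuinely hard; the only thing to get right is the bookkeeping that makes Lemma~1 applicable, namely always using $\gamma$ as the Mahlo index and recognizing that the common layering set $S$ of regular cardinals below $\gamma$ serves every factor at once.
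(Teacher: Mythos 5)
Your proof is correct and follows essentially the same route as the paper: fix $S$ as the set of regular cardinals below $\gamma$, note each $E(\xi,\gamma)$ is $S$-layered, deduce the same for $E(\xi,\gamma)^{<\xi}$ and then for the Easton product via Lemma~1, and conclude $\gamma$-cc. The paper leaves the padding of index sets with trivial factors implicit, but your bookkeeping is exactly what is needed to invoke Lemma~1 as stated.
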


\begin{proof}
Let $S$ be the set of regular cardinals $<\gamma$, 
which is stationary in $\gamma$.
For $\xi\in\mathsf{M}\cap\gamma$,
$E(\xi,\gamma)$ is $S$-layered,
so that $E(\xi,\gamma)^{<\xi}$ is $S$-layered.
Thus
$P(\gamma)$ is $S$-layered, 
which implies the desired result.
\end{proof}

Let
$\langle\dot{\eta}_{\gamma}(\iota): \iota \in\mathrm{Ord}\rangle$
be an injective list of $P(\gamma)$-names for ordinals such that
$\langle\dot{\eta}_{\gamma}(\iota): \iota<\xi\rangle$
lists $P(\gamma)$-names for ordinals $<\xi$
for every $\xi\in\mathsf{SR}-(\gamma+1)$.

By recursion on $\gamma\in\mathsf{M}-\{\mu\}$
we define for each 
$\alpha\in\mathsf{M}\cap \gamma$
\begin{itemize}
\item
a $\gamma$-cc poset $R(\alpha,\gamma)\subset V_\gamma$ 
with a projection
$\pi_{\alpha\gamma}:P(\gamma)\to R(\alpha,\gamma)$ and

\item
an injective list
$\langle\dot{\eta}_{\alpha\gamma}(\iota):\iota \in\mathrm{Ord}\rangle$
of $R(\alpha,\gamma)$-names for ordinals such that
$\langle\dot{\eta}_{\alpha\gamma}(\iota): \iota<\xi\rangle$
lists $R(\alpha,\gamma)$-names for ordinals $<\xi$
for every $\xi\in\mathsf{SR}-(\gamma+1)$.
\end{itemize}
(In practice we need 
$R(\alpha,\gamma)$, $\pi_{\alpha\gamma}$ and 
$\dot{\eta}_{\alpha\gamma}(\iota)$
up to some fixed cardinal,
so that the recursion can be carried out in ZFC.)

First let $R(\mu,\gamma)=P(\gamma)$,
$\pi_{\mu\gamma}=\mathrm{id}$ and
$\dot{\eta}_{\mu\gamma}(\iota)=\dot{\eta}_{\gamma}(\iota)$.
If $\mu<\alpha\in\mathsf{M}\cap\gamma$, define
$$R(\alpha,\gamma)=P(\alpha)\star\left(
\textstyle\prod^{<\alpha}_{\xi\in\mathsf{M}\cap\alpha}
\dot{E}(\alpha,\gamma)^{R(\xi,\alpha)}
\times\textstyle\prod^\mathrm{E}_{\xi\in\mathsf{M}\cap(\alpha,\gamma)}
\dot{E}(\xi,\gamma)\right)$$
and
$\pi_{\alpha\gamma}:P(\gamma)\to R(\alpha,\gamma)$
by composing the following projections:
\begin{align*} 
P(\gamma)
\simeq &\,  \textstyle\prod^\mathrm{E}_{\xi\in\mathsf{M}\cap\alpha}
E(\xi,\gamma)^{<\xi}\times E(\alpha,\gamma)^{<\alpha} \times
\textstyle\prod^\mathrm{E}_{\xi\in\mathsf{M}\cap(\alpha,\gamma)}
E(\xi,\gamma)^{<\xi}\\
\to &\,  \textstyle\prod^\mathrm{E}_{\xi\in\mathsf{M} \cap\alpha}
E(\xi,\alpha)^{<\xi}\times 
\textstyle\prod^{<\alpha}_{\xi\in\mathsf{M}\cap\alpha}{E}(\alpha,\gamma)\times
\textstyle\prod^\mathrm{E}_{\xi\in\mathsf{M}\cap(\alpha,\gamma)}
E(\xi,\gamma)\\
\to &\, P(\alpha)\star
\left(\textstyle\prod^{<\alpha}_{\xi\in\mathsf{M}\cap\alpha}
\dot{E}(\alpha,\gamma)^{R(\xi,\alpha)}
\times\textstyle\prod^\mathrm{E}_{\xi\in\mathsf{M}\cap(\alpha,\gamma)}
\dot{E}(\xi,\gamma)\right)
=R(\alpha,\gamma).
\end{align*} 
The first projection is defined by
$$(p,q,r) \mapsto 
(\langle\langle  p(\xi)(\zeta)|\alpha:\zeta\in\mathrm{dom}~p(\xi)\rangle:
\xi\in\mathrm{dom}~p\rangle, q|\mathsf{M}, 
\langle r(\xi)(\alpha):\xi\in\mathrm{dom}~r\rangle).$$
Note the use of the components indexed by $\alpha$ in the third coordinate.
This will be key for obtaining the commuting diagram in Lemma~9.
We get the second projection by Proposition~6, based on the lists of names  $\langle\dot\eta_{\alpha}(\iota) : \iota < \gamma \rangle$ and $\langle\dot\eta_{\xi\alpha}(\iota) : \iota < \gamma \rangle$.
To define
$\langle\dot{\eta}_{\alpha\gamma}({\iota}):\iota\in\mathrm{Ord}\rangle$,
we identify
$R(\alpha,\gamma)$ with an iteration 
$P(\alpha)*\dot{Q}_{\alpha\gamma}$.
Let
$$R(\alpha,\gamma)\Vdash\dot{\eta}_{\alpha\gamma}(\iota)=
\dot{\rho}_{\alpha\gamma}(\dot{\eta}_{\alpha}(\iota)).$$
Here 
$\langle\dot{\rho}_{\alpha\gamma}({\iota}):\iota\in\mathrm{Ord}\rangle$ 
is forced to be an injective list of $\dot{Q}_{\alpha\gamma}$-names for ordinals
such that
$\langle\dot{\rho}_{\alpha\gamma}({\iota}):\iota<\xi\rangle$
lists $\dot{Q}_{\alpha\gamma}$-names for ordinals $<\xi$
for every $\xi\in\mathsf{SR}-(\gamma+1)$.

Suppose $\alpha<\gamma$ are from $\mathsf{M}-\{\mu\}$.
Define (the latter as a set)
\begin{align*}
{Q}(\alpha,\gamma)
&=\textstyle\prod\nolimits^{<\alpha}_{\xi\in\mathsf{M}\cap\alpha}{E}(\alpha,\gamma)
\times
\prod\nolimits^\mathrm{E}_{\xi\in\mathsf{M}\cap(\alpha,\gamma)}E(\xi,\gamma),\\
\dot{Q}(\alpha,\gamma)
&=
\textstyle\prod\nolimits^{<\alpha}_{\xi\in\mathsf{M} \cap\alpha}
T(R(\xi,\alpha),\dot{E}(\alpha,\gamma))\times
\prod\nolimits^\mathrm{E}_{\xi\in\mathsf{M}\cap(\alpha,\gamma)}
T(P(\alpha),\dot{E}(\xi,\gamma)).
\end{align*}
We write 
$P(\alpha)\star\dot{Q}(\alpha,\gamma)$ for $R(\alpha,\gamma)$.
Thus
$\pi_{\alpha\gamma}$ is the composite of the projections
$${P}(\gamma)\xrightarrow{\pi^{\times}_{\alpha\gamma}}P(\alpha)\times{Q}(\alpha,\gamma)
\xrightarrow{\mathrm{id}\times i _{\alpha\gamma}}P(\alpha)\star\dot{Q}(\alpha,\gamma).$$

Since ${P}(\alpha)$ is $\alpha$-cc and
${Q}(\alpha,\gamma)$ is $\alpha$-closed,
${P}(\alpha)$ forces ${Q}(\alpha,\gamma)$ to be $\alpha$-Baire.
Thus
${P}(\alpha)\times{Q}(\alpha,\gamma)\Vdash\gamma=\alpha^{+}=\mu^{+2}$
because it is $\gamma$-cc and has a factor ${E}(\alpha,\gamma)$.
Therefore
$P(\alpha)\star\dot{Q}(\alpha,\gamma)\Vdash\gamma=\alpha^{+}=\mu^{+2}$
because of the projections
$$P(\alpha)\times{Q}(\alpha,\gamma)
\to P(\alpha)\star\dot{Q}(\alpha,\gamma)
\to P(\alpha)\star\dot{E}(\alpha,\gamma)^{{R}(\mu,\alpha)}
= P(\alpha)*\dot{E}(\alpha,\gamma).$$

Assume as in Theorem~1.
The poset there can be written as
$$R(\kappa,j(\kappa))*\dot{E}(j(\kappa),j^{2}(\kappa))
=P(\kappa)\star \dot{Q}(\kappa,j(\kappa))*\dot{E}(j(\kappa),j^{2}(\kappa)).$$
To prove Lemma~9 in \S6,
we need to add one requirement on our construction.
Let
$\mu<\alpha\in\mathsf{M}\cap \kappa$.
Then 
$j(P(\kappa))=P(j(\kappa))$ and
$j(P(\alpha)\star\dot{Q}(\alpha,\kappa))=
P(\alpha)\star\dot{Q}(\alpha,j(\kappa))$
by $M^{j(\kappa)}\subset M$.
The additional requirement is
$$j(\dot{\eta}_{\alpha \kappa}| \kappa)
=\dot{\eta}_{\alpha j(\kappa)}|j(\kappa),$$ 
which implies
$j(i_{\alpha \kappa})=i_{\alpha j(\kappa)}$
and in turn
$j(\pi_{\alpha \kappa})=\pi_{\alpha j(\kappa)}$.
This can be achieved as follows.  At the start, fix some well-ordering $\lhd$ of $V_\kappa$, and define the lists of names for ordinals below $\kappa$ using $\lhd$ and its canonical extensions to $V_\kappa^P$ for $P \in V_\kappa$.  Then carry out the construction up to $j(\kappa)$ using $j(\lhd)$.

\section{Master conditions}
Throughout this section we work under the hypothesis of Theorem~1
unless otherwise stated.
The main result of this section is

\begin{prop}
There is a master condition
$(1_{j(P(\kappa))},\dot{s}^{*})$ 
for 
$$\pi_{\kappa j(\kappa)}\circ\mathrm{pr}:
j(P(\kappa)\star\dot{Q}(\kappa,j(\kappa)))\rightarrow
P(\kappa)\star\dot{Q}(\kappa,j(\kappa)).$$
\end{prop}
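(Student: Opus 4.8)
The plan is to apply the master-condition criterion from \S2 to the projection $\pi=\pi_{\kappa j(\kappa)}:P(j(\kappa))\to R(\kappa,j(\kappa))$. Since $j(P(\kappa))=P(j(\kappa))$ and $j(R(\kappa,j(\kappa)))=R(j(\kappa),j^2(\kappa))$, it suffices to produce $\dot{s}^*\in\dot{Q}(j(\kappa),j^2(\kappa))$ such that, for every $\bar{p}\in P(j(\kappa))$, writing $\pi(\bar{p})=(r,\dot{t})$ with $\dot{t}=(\dot{t}_1,\dot{t}_2)\in\dot{Q}(\kappa,j(\kappa))$,
$$(\bar{p},\dot{s}^*)\le j(\pi(\bar{p}))=(j(r),j(\dot{t}))\quad\text{in }R(j(\kappa),j^2(\kappa)).$$
Unravelling the order of $\star$ (Lemma~5), this inequality splits into the first-coordinate demand $\bar{p}\le j(r)$ together with, at each coordinate of the two factors of $\dot{Q}(j(\kappa),j^2(\kappa))$, a term-forcing domination $\dot{s}^*(\,\cdot\,)\ \dot\le\ j(\dot{t})(\,\cdot\,)$ forced by the relevant projection of $\bar{p}$.

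The first-coordinate demand says that $1_{j(P(\kappa))}$ is a master condition for $\mathrm{pr}\circ\pi$, which by the first projection of \S5 is the map $\bar{p}\mapsto\langle\langle\bar{p}(\xi)(\zeta)|\kappa:\zeta\rangle:\xi\in\mathsf{M}\cap\kappa\rangle$ into $P(\kappa)$. The key point is that each value $\bar{p}(\xi)(\zeta)$ lies in $E(\xi,j(\kappa))$, whose support is Easton, so that $\bar{p}(\xi)(\zeta)|\kappa$ has support bounded below the regular cardinal $\kappa$ and hence lies in $V_\kappa$; consequently $r\in V_\kappa$ and $j(r)=r$. Since $\bar{p}(\xi)(\zeta)$ extends its restriction $\bar{p}(\xi)(\zeta)|\kappa=r(\xi)(\zeta)$ and $r$ is trivial off $\mathsf{M}\cap\kappa$, we get $\bar{p}\le r=j(r)$ for every $\bar{p}$, as required. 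In particular, for $V$-generic $K\subset P(j(\kappa))$ with $G=\mathrm{pr}(\pi(K))$, this lets us extend $j$ to $j:V[G]\to M[K]$, inside which the unions below are formed.

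With $j:V[G]\to M[K]$ fixed, I would define $\dot{s}^*$ coordinatewise, following the construction of Lemma~4. For the $<\!j(\kappa)$-supported factor, indexed by $\xi\in\mathsf{M}\cap\kappa$ over the base $R(\xi,j(\kappa))=j(R(\xi,\kappa))$ with collapse $\dot{E}(j(\kappa),j^2(\kappa))$, let $\dot{s}^*_1(\xi)$ name the coordinatewise union of the $j$-image of the $E(\kappa,j(\kappa))$-generic that $K$ induces on the $\xi$-th place of the middle factor of $P(j(\kappa))$ (and trivial for $\xi\ge\kappa$). For the Easton-supported factor, indexed by $\eta\in\mathsf{M}\cap(j(\kappa),j^2(\kappa))$ over the base $P(j(\kappa))$ with collapse $\dot{E}(\eta,j^2(\kappa))$, let $\dot{s}^*_2(\eta)$ name the analogous union coming, when $\eta=j(\xi)$, from the $E(\xi,j(\kappa))$-generic read off the $\kappa$-th place of the $\xi$-th coordinate of the last factor (trivial otherwise). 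That each union lands in the correct collapse of $M[K]$ with coordinate values of the right size, and that $j(\dot{t})$ has the matching coordinatewise form so that the comparisons typecheck, are the computations of Lemma~4 together with the coherence $j(i_{\xi\kappa})=i_{\xi j(\kappa)}$, $j(\pi_{\xi\kappa})=\pi_{\xi j(\kappa)}$ fixed in \S5. The domination $\dot{s}^*(\,\cdot\,)\ \dot\le\ j(\dot{t})(\,\cdot\,)$ below $\bar{p}$ is then exactly as in the last paragraph of Lemma~4: each coordinate of $\dot{s}^*$ is a union over the whole generic, so it absorbs the contribution of every $\bar{p}\in K$.

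The main obstacle is confirming that $\dot{s}^*$ is a genuine condition lying in $M$, and this is the single place where the full force of 2-hugeness (rather than the hugeness of Lemma~4) is used. The Easton factor $\dot{s}^*_2$ has support $\bigcup\{j(d):d\subset\mathsf{SR}\cap(\kappa,j(\kappa))\text{ is Easton in }V[G]\}$, which is an Easton subset of $(j(\kappa),j^2(\kappa))$ that may be cofinal in $j^2(\kappa)$, so $\dot{s}^*_2$ can be a tuple of size $j^2(\kappa)$; its membership in $M$, and the formation of the unions inside $M[K]$, is secured precisely by $M^{j^2(\kappa)}\subset M$. I expect the delicate checks to be the Easton-ness of this support computed in $M$ and the bound $<\!j^2(\kappa)$ on each coordinate value, both treated as in Lemma~4 with $\xi$ (or $\kappa$) in the role there played by $\kappa$.
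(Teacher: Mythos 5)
Your overall architecture matches the paper's: split $\dot{s}^{*}$ into a $<\!j(\kappa)$-supported part indexed by $\mathsf{M}\cap\kappa$ and an Easton part, verify the first-coordinate master condition for $\mathrm{pr}\circ\pi_{\kappa j(\kappa)}$ (your $V_{\kappa}$-argument for this is fine), and take each coordinate of $\dot{s}^{*}$ to name the coordinatewise union of the $j$-image of the relevant collapse generic. But there is a genuine gap at the heart of the first factor. The $\xi$-th coordinate of $\dot{Q}(\kappa,j(\kappa))$ is the term forcing $T(R(\xi,\kappa),\dot{E}(\kappa,j(\kappa)))$, so $\dot{s}^{*}_{1}(\xi)$ must be an $R(\xi,j(\kappa))$-name, i.e.\ it may only refer to the generic $K_{\xi}=\pi_{\xi j(\kappa)}(K)$, not to all of $K$. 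You define it as the union of $j[H_{\xi}]$, where $H_{\xi}$ is the $E(\kappa,j(\kappa))$-generic over $V[G_{\xi}]$ read off from $K$ --- but you never establish that $H_{\xi}$ is computable from $K_{\xi}$ alone. That is exactly the content of the paper's Lemma~9: one must construct a projection $\tau_{\xi}:j(R(\xi,\kappa))\to R(\xi,\kappa)*\dot{E}(\kappa,j(\kappa))$ for which $1$ is a master condition on the first coordinate and which fits into a commuting square with $(\mathrm{id}\times\mathrm{pr}_{\xi})\circ\pi_{\kappa j(\kappa)}$ and $\pi_{\xi\kappa}\times\mathrm{id}$; only then does Lemma~4 (applied to $\tau_{\xi}$) produce a legitimate $j(R(\xi,\kappa))$-name $\dot{q}^{*}_{\xi}$, and Foreman's lemma is then needed to transfer the master-condition property from the $j(R(\xi,\kappa))$-level up to $j(P(\kappa))$. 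This is not a routine ``typecheck'' absorbed by the coherence $j(\pi_{\xi\kappa})=\pi_{\xi j(\kappa)}$; it is the reason the first projection in \S5 was designed with the $\alpha$-indexed components in its third coordinate, and it is precisely the point where the quadruple analogue breaks down (the ``ghost coordinates'' Remark). Your proposal treats it as bookkeeping and so misses the main step.

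A secondary inaccuracy: you locate ``the full force of 2-hugeness'' in the membership of the Easton factor $\dot{s}^{*}_{2}$ in $M$. In fact the unions needed for Proposition~8 are of sets of size at most $j(\kappa)$ (the posets involved have size $\le j(\kappa)$, and Easton support below the regular cardinal $j^{2}(\kappa)$ is bounded), so closure of $M$ under $j(\kappa)$-sequences suffices here, exactly as in Lemma~4; the $j^{2}(\kappa)$-closure is consumed later, in the proof of Theorem~1, when handling the top collapse $\dot{E}(j(\kappa),j^{2}(\kappa))$.
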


The component $\dot{s}^{*}$ will have the form 
$(\langle\dot{q}^{*}_{\alpha}:\alpha\in\mathsf{M}\cap\kappa\rangle, \dot{r}^{*})$, 
where
$(1_{j(P(\kappa))},\dot{q}^{*}_{\alpha})$ 
is a master condition for a projection
$$j(P(\kappa)\star
\dot{E}(\kappa,j(\kappa))^{R(\alpha,\kappa)})
\rightarrow
P(\kappa)\star\dot{E}(\kappa,j(\kappa))^{R(\alpha,\kappa)}.$$
To get $\dot{q}^{*}_{\alpha}$
for $\alpha >\mu$, 
we extract the following lemma from \cite[Lemma~3.1]{f},
for which we assume $j:V\to M$ to be elementary.

\begin{llem}[Foreman]
Let
$\pi:P\to R$ be a projection and
$\dot{Q}$ an $R$-name for a poset.
Suppose
$\sigma:j(P)\to P\star\dot{Q}$ and
$\tau:j(R)\to R*\dot{Q}$ are projections.
Assume
\begin{itemize}
\item[(1)]
$j(P\star\dot{Q})=j(P)\star j(\dot{Q})$ and
$j(R*\dot{Q})=j(R)* j(\dot{Q})$,

\item[(2)]
$1_{j(P)}$ 
is a master condition for 
$\mathrm{pr}\circ\sigma:j(P)\rightarrow P$,

\item[(3)]
$(1_{j(R)},\dot{q}^*)$
is a master condition for 
$\tau\circ\mathrm{pr}:j(R*\dot{Q})\rightarrow R*\dot{Q}$
and

\item[(4)]
the following diagram of projections commutes:
\end{itemize}
\begin{center}
\begin{tikzpicture}[auto]
\node (d) at (2, 1.5) {$P\star\dot{Q}$};
\node (a) at (0, 0) {$j(R)$};   
\node (b) at (0, 1.5) {$j(P)$};
\node (c) at (2, 0) {$R*\dot{Q}$};

\draw[->] (a) to node[swap] {$\scriptstyle\tau$} (c);
\draw[->] (d) to node {$\scriptstyle\pi\times \mathrm{id}$} (c);
\draw[->] (b) to node {$\scriptstyle\sigma$} (d);
\draw[->] (b) to node[swap] {$\scriptstyle j(\pi)$} (a);
\end{tikzpicture}
\end{center}
Then
$(1_{j(P)},\dot{q}^*)$ is a master condition for 
$\sigma\circ\mathrm{pr}:j(P\star\dot{Q})\rightarrow P\star\dot{Q}$.
\end{llem}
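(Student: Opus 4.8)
The plan is to reduce the assertion to a single pointwise inequality and then verify that inequality coordinate by coordinate, drawing the $j(P)$-coordinate from hypothesis (2) and the name-coordinate from hypothesis (3) together with the commuting square (4). First I would invoke the characterization of master conditions recorded in \S2, applied now with $\star$ in place of $*$ and with $\sigma:j(P)\to P\star\dot Q$ in place of the projection $\pi:j(R)\to R*\dot Q$ considered there (the same verbatim argument goes through). This reduces the goal to showing
$$(\bar p,\dot q^*)\le j(\sigma(\bar p))\ \text{ in }\ j(P)\star j(\dot Q)=j(P\star\dot Q)\quad\text{for every }\bar p\in j(P),$$
where the equality of posets is supplied by (1). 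So I would fix $\bar p\in j(P)$ and write $\sigma(\bar p)=(p_0,\dot q_0)$ with $p_0\in P$ and $\dot q_0\in T(R,\dot Q)$, so that $j(\sigma(\bar p))=(j(p_0),j(\dot q_0))$. Unfolding the order on $j(P)\star j(\dot Q)$, whose associated projection is $j(\pi):j(P)\to j(R)$, the desired inequality splits into the two requirements
$$\bar p\le_{j(P)}j(p_0)\qquad\text{and}\qquad j(\pi)(\bar p)\Vdash_{j(R)}\dot q^*\ \dot\le\ j(\dot q_0).$$

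For the first requirement, note that $p_0=(\mathrm{pr}\circ\sigma)(\bar p)$, where $\mathrm{pr}:P\star\dot Q\to P$ is projection to the first coordinate. By hypothesis (2), $1_{j(P)}$ is a master condition for $\mathrm{pr}\circ\sigma$, and since $\bar p\le 1_{j(P)}$ this immediately yields $\bar p\le j((\mathrm{pr}\circ\sigma)(\bar p))=j(p_0)$.

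The second requirement is the crux, and is where the diagram (4) does its work. By (3) and the \S2 characterization applied to $\tau$, we have $(\bar r,\dot q^*)\le j(\tau(\bar r))$ in $j(R)*j(\dot Q)$ for every $\bar r\in j(R)$, and I would apply this with $\bar r=j(\pi)(\bar p)$. Commutativity of (4) computes $\tau(j(\pi)(\bar p))=(\pi\times\mathrm{id})(\sigma(\bar p))=(\pi(p_0),\dot q_0)$, so that $j(\tau(j(\pi)(\bar p)))=(j(\pi)(j(p_0)),j(\dot q_0))$. Reading off the second coordinate of the inequality $(j(\pi)(\bar p),\dot q^*)\le(j(\pi)(j(p_0)),j(\dot q_0))$ gives exactly $j(\pi)(\bar p)\Vdash\dot q^*\ \dot\le\ j(\dot q_0)$, which is the second requirement. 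Combining the two requirements yields $(\bar p,\dot q^*)\le j(\sigma(\bar p))$, and since $\bar p$ was arbitrary, this finishes the reduction.

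I expect the only genuine subtlety to be the alignment of the name-coordinates across the two posets $P\star\dot Q$ and $R*\dot Q$: one must check that the name $\dot q_0$ appearing in $\sigma(\bar p)\in P\star\dot Q$ is literally the same name that appears in $\tau(j(\pi)(\bar p))\in R*\dot Q$. This is precisely what (4) guarantees, together with the fact that $\pi\times\mathrm{id}$ leaves the name-coordinate untouched; it is this structural feature that lets the master condition $\dot q^*$ obtained at the $\tau$-level transfer without change to the $\sigma$-level. Everything else is bookkeeping with the definitions of $\star$, of the term-forcing representation of the second coordinate, and of master conditions, with no cardinal arithmetic or genericity argument required, since the statement is a purely combinatorial fact about projections and elementary embeddings.
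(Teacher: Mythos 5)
Your proof is correct and follows essentially the same route as the paper's: reduce to the pointwise inequality $(\bar p,\dot q^*)\le j(\sigma(\bar p))$, obtain the first coordinate from hypothesis (2), and obtain the second coordinate by applying hypothesis (3) at $\bar r=j(\pi)(\bar p)$ and computing $j(\tau(j(\pi)(\bar p)))$ via the commuting square (4). The only cosmetic difference is that you spell out the reduction step and the alignment of name-coordinates more explicitly than the paper does.
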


\begin{proof}
Let
$\bar{p}\in j(P)$ and
$\sigma(\bar{p})=(p,\dot{q})$.
It suffices to prove (as in the case of $*$) that
$(\bar{p},\dot{q}^*)\le(j(p),j(\dot{q}))$
in $j(P)\star j(\dot{Q})$.
By (2)
we have $\bar{p}\le j(p)$ in $j(P)$.
It remains to prove that
$j(\pi)(\bar{p})\Vdash\dot{q}^*\ \dot{\le}\ j(\dot{q})$.
By (3) and (4) we have
\begin{align*}
(j(\pi)(\bar{p}),\dot{q}^*)
&\le j(\tau(j(\pi)(\bar{p})))\\
&=j((\pi\times\mathrm{id})(\sigma(\bar{p})))\\
&=j((\pi\times\mathrm{id})(p,\dot{q}))\\
&=(j(\pi(p)),j(\dot{q})),
\end{align*}
which implies the desired result.
\end{proof}

While the conditions (1) and (2) are easy to verify,
(3) and (4) require the combination of Lemma~4 and

\begin{lem}
Suppose $\mu<\alpha\in\mathsf{M}\cap\kappa$.
Then there is a projection $\tau_{\alpha}$ such that 
\begin{center}
\begin{tikzpicture}[auto]
\node (d) at (5, 1.5) {$P(\kappa)\star\dot{E}(\kappa,j(\kappa))^{R(\alpha,\kappa)}$};
\node (a) at (0, 0) {$j(R(\alpha,\kappa))$};   
\node (b) at (0, 1.5) {$j(P(\kappa))$};
\node (c) at (5, 0) {$R(\alpha,\kappa)*\dot{E}(\kappa,j(\kappa))$};

\draw[->] (a) to node[swap] {$\scriptstyle\tau_{\alpha}$} (c);
\draw[->] (d) to node {$\scriptstyle\pi_{\alpha\kappa}\times \mathrm{id}$} (c);
\draw[->] (b) to node {$\scriptstyle(\mathrm{id}\times\mathrm{pr}_{\alpha})\circ\pi_{\kappa j(\kappa)}$} (d);
\draw[->] (b) to node[swap] {$\scriptstyle j(\pi_{\alpha\kappa})$} (a);
\end{tikzpicture}
\end{center}
commutes and
$1_{j(R(\alpha,\kappa))}$ 
is a master condition for
$\mathrm{pr}\circ\tau_{\alpha}:j(R(\alpha,\kappa))\to
R(\alpha,\kappa)$.
\end{lem}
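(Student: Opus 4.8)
The plan is to pin down the two left-hand vertices concretely and then build $\tau_\alpha$ coordinatewise. By the requirement imposed at the end of \S5 we have $j(\pi_{\alpha\kappa})=\pi_{\alpha j(\kappa)}$ and $j(R(\alpha,\kappa))=R(\alpha,j(\kappa))=P(\alpha)\star\dot Q(\alpha,j(\kappa))$, whose underlying set is $P(\alpha)\times\prod^{<\alpha}_{\xi\in\mathsf M\cap\alpha}T(R(\xi,\alpha),\dot E(\alpha,j(\kappa)))\times\prod^\mathrm{E}_{\xi\in\mathsf M\cap(\alpha,j(\kappa))}T(P(\alpha),\dot E(\xi,j(\kappa)))$. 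Writing a condition as $(p,a,b)$, I would set $\tau_\alpha(p,a,b)=((p,\tau^A(a),\tau^B(b)),\widehat{b(\kappa)})$, where on the $P(\alpha)$-coordinate $\tau_\alpha$ is the identity; $\tau^A$ truncates each $a(\xi)\in T(R(\xi,\alpha),\dot E(\alpha,j(\kappa)))$ to a term in $T(R(\xi,\alpha),\dot E(\alpha,\kappa))$; $\tau^B$ restricts $b$ to the indices $\xi\in\mathsf M\cap(\alpha,\kappa)$ and truncates each $b(\xi)$ to $T(P(\alpha),\dot E(\xi,\kappa))$; the single block $b(\kappa)$ is routed to the second iterand, reinterpreted through $\mathrm{pr}:R(\alpha,\kappa)\to P(\alpha)$ as an $R(\alpha,\kappa)$-name $\widehat{b(\kappa)}$ for an element of $E(\kappa,j(\kappa))$; and the blocks $b(\xi)$ with $\xi\in\mathsf M\cap(\kappa,j(\kappa))$ are discarded. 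All truncations are the standard restriction projections $E(\beta,j(\kappa))\to E(\beta,\kappa)$.

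To see that $\tau_\alpha$ is a projection I would exhibit it as a composite of projections. Each truncation projection on Easton collapses lifts to the relevant term forcings by the functoriality underlying Lemma~5, the reinterpretation of a $P(\alpha)$-term as an $R(\alpha,\kappa)$-name respects projections because $\mathrm{pr}:R(\alpha,\kappa)\to P(\alpha)$ is a projection, and discarding Easton coordinates is a projection. Their product is a projection onto $R(\alpha,\kappa)\times T(R(\alpha,\kappa),\dot E(\kappa,j(\kappa)))$, after which Laver's Lemma supplies the projection onto $R(\alpha,\kappa)*\dot E(\kappa,j(\kappa))$. Since composites and products of projections are projections, $\tau_\alpha$ is a projection.

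For the master-condition clause I would observe that $\mathrm{pr}\circ\tau_\alpha:R(\alpha,j(\kappa))\to R(\alpha,\kappa)$ is exactly the truncation map $(p,a,b)\mapsto(p,\tau^A(a),\tau^B(b))$, and that every $s\in R(\alpha,\kappa)\subseteq V_\kappa$ satisfies $j(s)=s$ since $\kappa=\mathrm{crit}(j)$. Given $\bar p=(p,a,b)\le 1$ and $s=\mathrm{pr}\circ\tau_\alpha(\bar p)$, the condition $\bar p$ extends $s$ in $R(\alpha,j(\kappa))$, because each truncated term is forced weaker than its untruncated counterpart and the supports only grow; hence $\bar p\le s=j(s)=j(\mathrm{pr}\circ\tau_\alpha(\bar p))$, which is the master-condition property.

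The main work, and the step I expect to be the obstacle, is commutativity of the square, which I would verify by chasing an arbitrary $\bar p\in P(j(\kappa))=j(P(\kappa))$. Expanding $\pi_{\alpha j(\kappa)}=j(\pi_{\alpha\kappa})$ and $\pi_{\kappa j(\kappa)}$ through their defining first projections, the $P(\alpha)$-, $A$-, and low-$B$-coordinates of both paths reduce to the same expressions $\langle\langle\bar p(\xi)(\zeta)|\alpha\rangle\rangle$, $\langle\bar p(\alpha)(\xi)|\kappa\rangle$, and $\langle\bar p(\xi)(\alpha)|\kappa:\xi\in\mathsf M\cap(\alpha,\kappa)\rangle$, using $(\,\cdot\,|\kappa)|\alpha=\,\cdot\,|\alpha$ for $\alpha<\kappa$. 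The decisive coordinate is the iterand $\dot E(\kappa,j(\kappa))$: along the left path the first projection of $\pi_{\alpha j(\kappa)}$ extracts from the $\xi=\kappa$ Easton block its $\alpha$-indexed entry $\bar p(\kappa)(\alpha)$ --- precisely the ``use of the components indexed by $\alpha$ in the third coordinate'' flagged in \S5 --- which $\tau_\alpha$ then routes to the second iterand; along the right path $\mathrm{pr}_\alpha$ selects the $\xi=\alpha$ factor of the $<\!\kappa$-support product, which by the definition of $\pi_{\kappa j(\kappa)}$ is again $\bar p(\kappa)(\alpha)$, and $\pi_{\alpha\kappa}\times\mathrm{id}$ carries it through unchanged. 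The remaining subtlety is that the term-forcing identifications must agree on the two sides; this is exactly what the requirement $j(\dot\eta_{\alpha\kappa}|\kappa)=\dot\eta_{\alpha j(\kappa)}|j(\kappa)$, hence $j(i_{\alpha\kappa})=i_{\alpha j(\kappa)}$, is arranged to deliver, so that the $i$-embeddings commute with truncation. I expect the bookkeeping of these identifications across $j$ to be the most delicate part of the argument.
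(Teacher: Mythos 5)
Your overall architecture matches the paper's: truncate the $\dot Q(\alpha,j(\kappa))$-coordinates below $\kappa$, route the block at $\kappa$ to the second iterand, discard the blocks above $\kappa$, and check commutativity on the decisive coordinate $\bar p(\kappa)(\alpha)$. The gap is in how you route that block. You take the $P(\alpha)$-term $b(\kappa)$ and reinterpret it through $\mathrm{pr}:R(\alpha,\kappa)\to P(\alpha)$ as an $R(\alpha,\kappa)$-name. This fails on both counts. First, the square does not commute: along the top path, $i_{\kappa j(\kappa)}$ followed by $\mathrm{pr}_\alpha$ sends $\bar p(\kappa)(\alpha)=r$ to the term $\langle\langle\dot\eta_{\alpha\kappa}(r(\xi)(\zeta))\rangle\rangle$ built from the list $\langle\dot\eta_{\alpha\kappa}(\iota)\rangle$ of $R(\alpha,\kappa)$-names, which by the recursion $\dot\eta_{\alpha\kappa}(\iota)=\dot\rho_{\alpha\kappa}(\dot\eta_\alpha(\iota))$ genuinely depends on the $\dot Q_{\alpha\kappa}$-generic; along your bottom path one gets the reinterpretation of $\langle\langle\dot\eta_\alpha(r(\xi)(\zeta))\rangle\rangle$, which depends only on the $P(\alpha)$-generic. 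These are not forced equal. The requirement $j(\dot\eta_{\alpha\kappa}|\kappa)=\dot\eta_{\alpha j(\kappa)}|j(\kappa)$ that you invoke only serves to identify the left vertical arrow with $\pi_{\alpha j(\kappa)}$; it says nothing about the mismatch between the lists $\dot\eta_\alpha$ and $\dot\eta_{\alpha\kappa}$, which is where the real work lies. Second, your map is not a projection: in the last coordinate its range consists of reinterpreted $P(\alpha)$-terms, which is not dense in $T(R(\alpha,\kappa),\dot E(\kappa,j(\kappa)))$, since $\dot Q_{\alpha\kappa}$ adds new bounded subsets of $\kappa$ and hence new conditions of $E(\kappa,j(\kappa))$ that have no extension lying in the $P(\alpha)$-extension.

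The paper's fix is the map $\varphi$ sending $i^{\mu\alpha}(r)=\langle\langle\dot\eta_{\alpha}(r(\xi)(\zeta))\rangle\rangle$ to $i^{\alpha\kappa}(r)=\langle\langle\dot\eta_{\alpha\kappa}(r(\xi)(\zeta))\rangle\rangle$ for $r\in E(\kappa,j(\kappa))$: a change of basis between the two term identifications rather than a reinterpretation. With this choice commutativity of the relevant triangle holds by definition, and the fact that $\mathrm{id}\times\mathrm{id}\times\varphi$ is a projection is a separate step, proved by observing that $\mathrm{id}\times\varphi$ is forced over $P(\alpha)$ to be the Laver/Lemma~3 projection $\dot Q_{\alpha\kappa}\times\dot E(\kappa,j(\kappa))\to\dot Q_{\alpha\kappa}*\dot E(\kappa,j(\kappa))$ based on the list $\langle\dot\rho_{\alpha\kappa}(\iota)\rangle$. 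Your master-condition argument survives essentially unchanged, since, as the paper notes, $\mathrm{pr}\circ\tau_\alpha$ does not depend on the choice of $\varphi$.
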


Here 
$\mathrm{pr}_{\alpha}:\dot{Q}(\kappa,j(\kappa))\to
T(R(\alpha,\kappa),\dot{E}(\kappa,j(\kappa))$
is defined by 
$(\dot{q}, \dot{r})\mapsto \dot{q}(\alpha)$.
Recall that
$j(\pi_{\alpha \kappa})$ is equal to
$\pi_{\alpha j(\kappa)}:P(j(\kappa))\to P(\alpha)\star\dot{Q}(\alpha,j(\kappa))$
by our construction.

\begin{proof}
It is straightforward to check that the following diagram commutes:
\begin{center}
\begin{tikzpicture}[auto]
\node (h) at (4, 3) {$P(j(\kappa))$}; 
\node (i) at (8, 3) {$P(\kappa)\times{Q}(\kappa,j(\kappa))$};
\node (f) at (8, 2) {$P(\kappa)\star\dot{Q}(\kappa,j(\kappa))$};
\node (g) at (0, 3) {$P(\alpha)\times{Q}(\alpha,j(\kappa))$}; 
\node (d) at (4, 1.5) {$P(\alpha)\times{Q}(\alpha,\kappa)\times{E}(\kappa,j(\kappa))$};
\node (c) at (8, 1) {$P(\kappa)\star\dot{E}(\kappa,j(\kappa))^{P(\alpha)\star\dot{Q}(\alpha,\kappa)}$};
\node (e) at (0, 2) {$P(\alpha)\star\dot{Q}(\alpha,j(\kappa))$};   
\node (a) at (0, 0) {$P(\alpha)\star(\dot{Q}(\alpha,\kappa)\times 
\dot{E}(\kappa,j(\kappa)))$};
\node (b) at (8, 0) {$P(\alpha)\star\dot{Q}(\alpha,\kappa)*\dot{E}(\kappa,j(\kappa))$};

\draw[->] (c) to node {$\scriptstyle\pi_{\alpha\kappa}\times\mathrm{id}$} (b);
\draw[->] (d) to node[swap] {$\scriptstyle\mathrm{id}\times i_{\alpha\kappa}\times i ^{\mu\alpha}$} (a);
\draw[->] (e) to node[swap] {$\scriptstyle\mathrm{id}\times\mathrm{res}_\kappa$} (a);
\draw[->] (d) to node[swap] {$\scriptstyle\mathrm{id}\times i _{\alpha\kappa}\times i ^{\alpha\kappa}$} (b);
\draw[->] (f) to node {$\scriptstyle\mathrm{id}\times\mathrm{pr}_{\alpha}$} (c);
\draw[->] (h) to node[swap] {$\scriptstyle\pi^{\times}_{\alpha j(\kappa)}$} (g);
\draw[->] (g) to node[swap] {$\scriptstyle\mathrm{id}\times i_{\alpha j(\kappa)}$} (e);
\draw[->] (g) to node {$\scriptstyle\mathrm{id}\times\mathrm{res}^\times_{\kappa}$} (d);
\draw[->] (i) to node[swap] {$\scriptstyle\pi^{\times}_{\alpha\kappa}\times\mathrm{pr}^\times_{\alpha}$} (d);
\draw[->] (h) to node {$\scriptstyle\pi^{\times}_{\kappa j(\kappa)}$} (i);
\draw[->] (i) to node {$\scriptstyle\mathrm{id}\times i_{\kappa j(\kappa)}$} (f);
\end{tikzpicture}
\end{center}
Here 
$\mathrm{res}^\times_{\kappa}:({q}, {r})\mapsto
((\langle {q}(\xi)|\kappa:\xi\in\mathrm{dom}~{q}\rangle,
\langle {r}(\xi)|\kappa:\xi\in\mathrm{dom}~{r}\cap\kappa\rangle),
{r}(\kappa))$ 
and
$\mathrm{pr}^\times_{\alpha}:({q},{r})\mapsto{q}(\alpha)$,
so that the projections
$$P(j(\kappa))\to
P(\alpha)\times{Q}(\alpha,\kappa)\times{E}(\kappa,j(\kappa))$$
map $p$ to the triple of
\begin{itemize}
\item
$\langle\langle p(\xi)(\zeta)|\alpha:\zeta\in\mathrm{dom}~p(\xi)\rangle:
\xi\in\mathrm{dom}~p\cap\alpha\rangle$,
\item
$(\langle p(\alpha)(\zeta)|\kappa:\zeta\in\mathsf{M}\cap\mathrm{dom}~p(\alpha)\rangle,
\langle p(\xi)(\alpha)|\kappa:\xi\in\mathrm{dom}~p\cap(\alpha,\kappa)\rangle)$
and
\item
$p(\kappa)(\alpha)$.
\end{itemize}
In addition
$\mathrm{res}_\kappa:
(\dot{q}, \dot{r})\mapsto
((\langle \dot{q}(\xi)|\kappa:\xi\in\mathrm{dom}~\dot{q}\rangle,
\langle \dot{r}(\xi)|\kappa:\xi\in\mathrm{dom}~\dot{r}\cap\kappa\rangle),
\dot{r}(\kappa))$,
and 
$i ^{\beta \gamma}:{E}(\kappa,j(\kappa))\to
T(R(\beta, \gamma),\dot{E}(\kappa,j(\kappa)))$
is defined as in Lemma~3 for
$(\beta, \gamma)=(\mu, \alpha)$, $(\alpha, \kappa)$.

It remains to construct a map 
$$\varphi:T(P(\alpha),\dot{E}(\kappa,j(\kappa)))\to
T(P(\alpha)\star\dot{Q}(\alpha,\kappa),\dot{E}(\kappa,j(\kappa)))$$
such that the following diagram of projections commutes:
\begin{center}
\begin{tikzpicture}[auto]
\node (c) at (4, 1.5) {$P(\alpha)\times{Q}(\alpha,\kappa)\times{E}(\kappa,j(\kappa))$}; 
\node (a) at (0, 0) {$P(\alpha)\star(\dot{Q}(\alpha,\kappa)\times \dot{E}(\kappa,j(\kappa)))$};   
\node (b) at (8, 0) {$P(\alpha)\star\dot{Q}(\alpha,\kappa)*\dot{E}(\kappa,j(\kappa))$};

\draw[->] (a) to node[swap] {$\scriptstyle\mathrm{id}\times\mathrm{id}\times\varphi$} (b);
\draw[->] (c) to node[swap] {$\scriptstyle\mathrm{id}\times i_{\alpha\kappa}\times i^{\mu\alpha}$} (a);
\draw[->] (c) to node {$\scriptstyle\mathrm{id}\times i _{\alpha\kappa}\times i^{\alpha\kappa}$} (b);
\end{tikzpicture}
\end{center}
Note that the resulting projection
$$\mathrm{pr}\circ\tau_{\alpha}:
j(P(\alpha)\star\dot{Q}(\alpha,\kappa))\to
P(\alpha)\star\dot{Q}(\alpha,\kappa)$$
does not depend on the choice of $\varphi$, for which
$1_{j(P(\alpha)\star\dot{Q}(\alpha,\kappa))}$ 
is easily seen to be a master condition.

To define 
$\varphi$,
first recall that 
$i^{\mu\alpha}$ and $i^{\alpha\kappa}$
are based on
$\langle\dot{\eta}_{\alpha}(\iota):\iota<j(\kappa)\rangle$
and
$\langle\dot{\eta}_{\alpha\kappa}(\iota):\iota<j(\kappa)\rangle$
respectively.
Let $r\in{E}(\kappa,j(\kappa))$.
Then
$i^{\mu\alpha}:r\mapsto\dot{s}$ and 
$i^{\alpha\kappa}:r\mapsto\dot{t}$ are defined by
\begin{align}
P(\alpha) & \Vdash\dot{s}
=\langle \langle
\dot{\eta}_{\alpha}(r(\xi)(\zeta)):\zeta\in\mathrm{dom}~r(\xi)\rangle: 
\xi\in\mathrm{dom}~r\rangle,\\
P(\alpha)\star\dot{Q}(\alpha,\kappa) &
\Vdash\dot{t}=
\langle \langle
\dot{\eta}_{\alpha\kappa}(r(\xi)(\zeta)):\zeta\in\mathrm{dom}~r(\xi)\rangle: 
\xi\in\mathrm{dom}~r\rangle.
\end{align}

Let $\dot{s}\in T(P(\alpha),\dot{E}(\kappa,j(\kappa)))$.
We may assume (1) for some $r\in {E}(\kappa,j(\kappa))$.
Define
$\varphi:\dot{s}\mapsto\dot{t}\in 
T(P(\alpha)\star\dot{Q}(\alpha,\kappa),\dot{E}(\kappa,j(\kappa)))$
by (2).
Then it is easy to see that this makes the diagram commute.

It remains to prove that
$$\mathrm{id}\times\mathrm{id}\times\varphi:
P(\alpha)*(\dot{Q}_{\alpha\kappa}\times \dot{E}(\kappa,j(\kappa)))
\to P(\alpha)*\dot{Q}_{\alpha\kappa}*\dot{E}(\kappa,j(\kappa))$$
is a projection, where
$P(\alpha)*\dot{Q}_{\alpha\kappa}\simeq P(\alpha)\star\dot{Q}(\alpha,\kappa)$.
This is because
$$\mathrm{id}\times\varphi:
\dot{Q}_{\alpha\kappa}\times \dot{E}(\kappa,j(\kappa))
\to\dot{Q}_{\alpha\kappa}*\dot{E}(\kappa,j(\kappa))$$
is forced to be based on the list
$\langle\dot{\rho}_{\alpha\kappa}({\iota}):\iota<j(\kappa)\rangle$
of $\dot{Q}_{\alpha\kappa}$-names for ordinals.
\end{proof}

It remains to give
$\dot{r}^{*}$, for which we prove

\begin{lem}
Suppose
$\pi:j(P)\to 
P\star\prod^\mathrm{E}_{\xi\in\mathsf{M}\cap(\kappa,j(\kappa))}
\dot{E}(\xi,j(\kappa))$
is a projection, where
$P$ is $\kappa$-cc and of size $\le\kappa$.
Assume $1_{j(P)}$ 
is a master condition for 
$\mathrm{pr}\circ{\pi}:j(P)\rightarrow P$.
Then there is a master condition
$(1_{j(P)}, \dot{r}^{*})$ for 
$${\pi}\circ\mathrm{pr}:
j\left(P\star
\textstyle\prod^\mathrm{E}_{\xi\in\mathsf{M} \cap(\kappa,j(\kappa))}\dot{E}(\xi,j(\kappa))\right)
\rightarrow
P\star\textstyle\prod^\mathrm{E}_{\xi\in\mathsf{M} \cap(\kappa,j(\kappa))}\dot{E}(\xi,j(\kappa)).$$
\end{lem}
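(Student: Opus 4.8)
The plan is to follow the proof of Lemma~4, of which this is the product analogue. Writing $\dot{K}$ for the canonical $j(P)$-name for the generic filter, I would claim that
\[
j(P)\Vdash\dot{r}^{*}=\text{the coordinatewise union of }
\{j(\dot{r}):\exists\bar{p}\in\dot{K}\,\exists p\,(\pi(\bar{p})=(p,\dot{r}))\}
\]
is a master condition, where the coordinatewise union is taken first over the product coordinates $\xi\in\mathsf{M}\cap(\kappa,j(\kappa))$ and then, inside each coordinate, over the coordinates of the relevant Easton collapse.

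To see that $j(P)\Vdash\dot{r}^{*}\in j(\prod^{\mathrm{E}}_{\xi\in\mathsf{M}\cap(\kappa,j(\kappa))}\dot{E}(\xi,j(\kappa)))$, let $K\subset j(P)$ be $V$-generic and work in $V[K]$. From $\pi(K)$ I would extract a $V$-generic $G\subset P$ together with the induced $V[G]$-generic $\vec{H}=\langle H_{\xi}:\xi\in\mathsf{M}\cap(\kappa,j(\kappa))\rangle$ over $\prod^{\mathrm{E}}_{\xi}E(\xi,j(\kappa))^{V[G]}$. Since $1_{j(P)}$ is a master condition for $\mathrm{pr}\circ\pi$, the embedding lifts to $j:V[G]\to M[K]$. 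As $|j(P)|\le j(\kappa)$, the product $\prod^{\mathrm{E}}_{\xi}E(\xi,j(\kappa))^{V[G]}$ has size $j(\kappa)$, so $|\vec{H}|\le j(\kappa)$ and $j[\vec{H}]\in[M[K]]^{j(\kappa)}\subset M[K]$; hence $\dot{r}^{*}_{K}$, the coordinatewise union of $j[\vec{H}]$, lies in $M[K]$.

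It remains to check that $\dot{r}^{*}_{K}$ is a legitimate condition of $\prod^{\mathrm{E}}_{\eta\in\mathsf{M}\cap(j(\kappa),j^{2}(\kappa))}E(\eta,j^{2}(\kappa))^{M[K]}$. Both its outer support and the inner support of each coordinate are of the form $\bigcup\{j(d):d\text{ is Easton in }V[G]\}$, and so are Easton exactly as in Lemma~4. The one genuinely new point, which I expect to be the main obstacle, is bounding the sizes of the innermost coordinate values. In Lemma~4 the collapse is $E(\kappa,j(\kappa))$, whose coordinate values have size below $\kappa=\mathrm{crit}(j)$, so $j$ acts on them pointwise and each coordinate of the master condition is literally $j$ applied to a union of size $<j(\kappa)$. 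Here $\xi>\kappa$, the values $r(\xi)(\zeta)\in\zeta^{<\xi}$ may have size $\ge\kappa$, and $j$ no longer acts pointwise. Instead I would argue that for a fixed $\xi$ there are at most $j(\kappa)<j(\xi)$ relevant conditions $r(\xi)$; their images are pairwise compatible conditions of $E(j(\xi),j^{2}(\kappa))^{M[K]}$, each of size $<j(\xi)$; and since $\xi$ is Mahlo, $j(\xi)$ is regular in $M$, so the coordinatewise union of these fewer-than-$j(\xi)$ conditions again has size $<j(\xi)$. Thus each value lies in $(j(\zeta))^{<j(\xi)}$ and $\dot{r}^{*}_{K}(j(\xi))\in E(j(\xi),j^{2}(\kappa))^{M[K]}$, as needed.

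Finally, the master-condition check is as in Lemma~4. Given $\bar{p}\in j(P)$ with $\pi(\bar{p})=(p,\dot{r})$, it suffices to show $(\bar{p},\dot{r}^{*})\le(j(p),j(\dot{r}))$ in $j(P)\star\prod^{\mathrm{E}}_{\eta}\dot{E}(\eta,j^{2}(\kappa))$. The inequality $\bar{p}\le j(p)$ in the first coordinate is precisely the hypothesis that $1_{j(P)}$ is a master condition for $\mathrm{pr}\circ\pi$; and for each $\eta=j(\xi)\in\mathrm{dom}\,j(\dot{r})$, any $V$-generic $K\ni\bar{p}$ makes $j(\dot{r})$ one of the terms whose coordinatewise union is $\dot{r}^{*}$, whence $\bar{p}\Vdash\dot{r}^{*}(\eta)\le j(\dot{r})(\eta)$. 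This yields the required inequality.
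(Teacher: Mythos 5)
Your argument is correct, but it follows a genuinely different route from the paper's. The paper sidesteps the explicit coordinatewise union: by Lemma~3(1), $\prod^{\mathrm{E}}_{\xi}T(P,\dot E(\xi,j(\kappa)))\simeq T(P,\prod^{\mathrm{E}}_{\xi}\dot E(\xi,j(\kappa)))$, so the domain poset densely embeds into $P*\dot Q$ with $\dot Q$ a name for a $\kappa^{+}$-directed-closed poset of size $j(\kappa)$ --- directed closure at $\kappa^{+}$ is exactly what is available here (every $E(\xi,j(\kappa))$ with $\xi>\kappa$ has it) and not in Lemma~4 (where $E(\kappa,j(\kappa))$ is only $\kappa$-directed-closed). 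Then $j(\dot Q)$ is $j(\kappa)^{+}$-directed-closed and $j[H]$ is a directed set of size $j(\kappa)$, so a lower bound exists abstractly and is then translated back into a condition $\dot r^{*}$. Your explicit union is the Lemma~4 route, and you correctly isolate and solve the one new obstacle, namely that $j$ no longer acts pointwise on the values $r(\xi)(\zeta)$; your counting argument (at most $j(\kappa)$ pairwise compatible conditions, each of size $<j(\xi)$, with $j(\xi)>j(\kappa)$ regular) is sound, and is in fact cruder than the $\delta<j(\kappa)$ refinement Lemma~4 needs, precisely because here the relevant regular cardinal exceeds $j(\kappa)$. What the paper's version buys is brevity; what yours buys is uniformity with Lemma~4 and an explicit description of $\dot r^{*}$. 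Two small points: the coordinates of the union range over $j(\mathrm{dom}~r)$, not only over ordinals of the form $j(\xi)$, but your regularity argument applies verbatim to any $\eta\in j(\mathsf{M})\cap(j(\kappa),j^{2}(\kappa))$; and in the $\star$-poset $\dot r^{*}$ must be an honest sequence of term-forcing conditions indexed by an Easton set fixed in $V$, so its support should be taken as a $V$-cover of all possible generic domains (available since $P$ is $\kappa$-cc and of size $\le\kappa$) --- a bookkeeping step the paper's own proof also leaves implicit.
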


Note that
$j\left(P\star
\textstyle\prod^\mathrm{E}_{\xi\in\mathsf{M} \cap(\kappa,j(\kappa))}\dot{E}(\xi,j(\kappa))\right)
=j(P)\star
\textstyle\prod^\mathrm{E}_{\xi\in\mathsf{M}\cap(j(\kappa),j^{2}(\kappa))}
\dot{E}(\xi,j^{2}(\kappa))$.

\begin{proof}
Note that by Lemma~3, 
$$\textstyle\prod^{\mathrm{E}}_{\xi\in\mathsf{M}\cap(\kappa,j(\kappa))} T\left(P,\dot E(\xi,j(\kappa))\right) \simeq T \left( P, \textstyle\prod^\mathrm{E}_{\xi\in\mathsf{M} \cap(\kappa,j(\kappa))}\dot{E}(\xi,j(\kappa))\right).$$
Thus there is a dense embedding $d : P\star\prod^\mathrm{E}_{\xi\in\mathsf{M}\cap(\kappa,j(\kappa))}
\dot{E}(\xi,j(\kappa)) \to P * \dot Q$, where $\dot Q$ is a $P$-name for a $\kappa^+$-directed-closed poset of size $j(\kappa)$, such that $d$ is the identity on the first coordinate.

Let $K \subset j(P)$ be $V$-generic, and let $G * H = d \circ \pi(K)$.  In $M[K]$, 
$j[H]$ is a directed subset of size $j(\kappa)$ in the $j(\kappa)^+$-directed-closed poset $j(\dot Q)_K$.  We can thus take a $j(P)$-name $\dot q^*$ for a lower bound in $j(\dot Q)$.  Translating $\dot q^*$ into a condition $\dot r^* \in \textstyle\prod^\mathrm{E}_{\xi\in\mathsf{M}\cap(j(\kappa),j^{2}(\kappa))}
\dot{E}(\xi,j^{2}(\kappa))$, it is straightforward to check that $r^*$ is as desired.
\end{proof}

\begin{proof}[Proof of Proposition~8]
Let $\alpha\in\mathsf{M}\cap\kappa$.
We claim that there is a master condition 
$(1_{j(P(\kappa))}, \dot{q}^{*}_{\alpha})$ 
for 
$$\sigma\circ\mathrm{pr}:
j(P(\kappa)\star\dot{E}(\kappa,j(\kappa))^{R(\alpha, \kappa)})
\to P(\kappa)\star\dot{E}(\kappa,j(\kappa))^{R(\alpha, \kappa)},$$
where $\sigma$ is
$(\mathrm{id}\times\mathrm{pr}_{\alpha}) \circ \pi_{\kappa j(\kappa)}:
j(P(\kappa))
\to P(\kappa)\star\dot{E}(\kappa,j(\kappa))^{R(\alpha, \kappa)}$.

If $\alpha=\mu$,
then 
$R(\alpha,\kappa)=P(\kappa)$, so that
the claim follows from Lemma~4 with $\pi=\sigma$.
Next assume $\alpha>\mu$.
Then there is a projection
$$\tau_{\alpha}:
j({R(\alpha, \kappa)})\to
{R(\alpha, \kappa)}*\dot{E}(\kappa,j(\kappa))$$
as in Lemma~9.
By Lemma~4 with $\pi=\tau_{\alpha}$
there is a master condition 
$(1_{j\left(R(\alpha,\kappa)\right)}, \dot{q}^{*}_{\alpha})$
for 
$$\tau_{\alpha}\circ\mathrm{pr}:
j({R(\alpha, \kappa)}*\dot{E}(\kappa,j(\kappa)))\to
{R(\alpha, \kappa)}*\dot{E}(\kappa,j(\kappa)).$$
The claim follows from Foreman's lemma with
$\pi=\pi_{\alpha \kappa}$, 
$\sigma=(\mathrm{id}\times\mathrm{pr}_{\alpha}) \circ \pi_{\kappa j(\kappa)}$
and $\tau=\tau_{\alpha}$.

By Lemma~10 we get a master condition
$(1_{j(P(\kappa))},\dot{r}^{*})$
for 
$$\pi\circ\mathrm{pr}:
j\left(P(\kappa)\star\textstyle\prod^\mathrm{E}_
{\xi\in\mathsf{M}\cap(\kappa,j(\kappa))}\dot{E}(\xi, j(\kappa))\right)
\to
P(\kappa)\star\textstyle\prod^\mathrm{E}_{\xi\in\mathsf{M}\cap(\kappa,j(\kappa))}
\dot{E}(\xi, j(\kappa)),$$
where
$\pi$ is 
$(\mathrm{id}\times\mathrm{pr})\circ\pi_{\kappa j(\kappa)}:
j(P(\kappa))\to
P(\kappa)\star\textstyle\prod^\mathrm{E}_{\xi\in\mathsf{M}\cap(\kappa,j(\kappa))}
\dot{E}(\xi, j(\kappa))$.

Now it is easy to see that
$(1_{j(P(\kappa))},
({\langle \dot{q}^{*}_{\alpha}:\alpha\in\mathsf{M}\cap\kappa\rangle}, \dot{r}^{*}))$
is as desired.
\end{proof}

\begin{rem}
Suppose $\kappa$ is 3-huge and $j:V\to M$ is a witness.
Toward a model of
$(\omega_4,\omega_3,\omega_2,\omega_1)\twoheadrightarrow
(\omega_3,\omega_2,\omega_1,\omega)$,
it seems natural to force with a poset of the form
$$P(\kappa)\star\dot{Q}(\kappa,j(\kappa))
\star\dot{R}(j(\kappa),j^{2}(\kappa))*\dot{E}(j^{2}(\kappa),j^{3}(\kappa)).$$
Here
$\dot{Q}(\alpha,\gamma)$ and $\dot{R}(\alpha,\gamma)$ 
are defined by recursion for a pair of
(say) Mahlo cardinals $\alpha<\gamma$ so that
there is a projection
\begin{align*}
P(j(\kappa))&\star\dot{Q}(j(\kappa),j^{2}(\kappa))
\star\dot{R}(j^{2}(\kappa),j^{3}(\kappa))\\
\to
P(\kappa)\star\dot{Q}(\kappa,j(\kappa))
&\star\dot{R}(j(\kappa),j^{2}(\kappa))*\dot{E}(j^{2}(\kappa),j^{3}(\kappa)).
\end{align*}
To meet the requirement, 
$\dot{R}(j^{2}(\kappa),j^{3}(\kappa))$
should have a factor
$$\dot{E}(j^{2}(\kappa),j^{3}(\kappa))
^{P(\kappa)\star\dot{Q}(\kappa,j(\kappa))
\star\dot{R}(j(\kappa),j^{2}(\kappa))}.$$
Then
$\dot{R}(j(\kappa),j^{2}(\kappa))$
would have a factor  
$$\textstyle\prod_{\alpha\in\mathsf{M}\cap \kappa}
\dot{E}(j(\kappa),j^{2}(\kappa))^{P(\alpha)\star\dot{Q}(\alpha, \kappa)
\star\dot{R}(\kappa,j(\kappa))}.$$
If the structure of the forcing is to be broadly similar to ours,
the stage $\dot R(j(\kappa),j^2(\kappa))$ should not add bounded subsets of $j(\kappa)$.
Therefore, the above product must be taken with full support.

In order to construct a master condition, we need $\dot r^* \in \dot R(j^2(\kappa),j^3(\kappa))$ 
that is forced to be below $j(r)$ for all $r \in K \subset \dot R(j(\kappa),j^2(\kappa))$, where $K$ is the generic filter.
For $\gamma \in \mathsf M \cap j(\kappa)$, we would like to argue that 
$$\dot r^*(\gamma) \in \dot{E}(j^{2}(\kappa),j^{3}(\kappa))
^{P(\gamma)\star\dot{Q}(\gamma,j(\kappa))
\star\dot{R}(j(\kappa),j^{2}(\kappa))}$$
can be taken as the coordinatewise union of $\{ j(r)(\gamma) : r \in K \}$.
This set would need to be computable in the intermediate extension 
$$W = V^{P(\gamma) \star \dot Q(\gamma,j(\kappa)) \star \dot R(j(\kappa),j^2(\kappa))} \subset V^{P(j(\kappa)) \star \dot Q(j(\kappa),j^2(\kappa))},$$
in which the relevant Easton collapse is $j^2(\kappa)$-closed.
But for $\gamma \in j(\kappa) \setminus \kappa$, 
we would seem to need the entire filter $K$ in order to compute $\{ j(r)(\gamma) : r \in K \}$.  This would only be available in an extension 
$W' \supseteq  V^{P(\kappa) \star \dot Q(\kappa,j(\kappa)) \star \dot R(j(\kappa),j^2(\kappa))}$.
But in $W'$, $\gamma$ is not a cardinal, and thus $K$ is not available to $W$.

This issue is avoided in our construction.  Since our $\dot Q(\kappa,j(\kappa))$ uses ${<}\kappa$ supports in its first coordinate, we can take master conditions at each coordinate and concatenate them.
\end{rem}

\section{Proof of Theorem~1}
We are ready for
\begin{proof}[Proof of Theorem~1]
Since
${R}(\kappa,j(\kappa))=P(\kappa)\star\dot{Q}(\kappa,j(\kappa))$
forces $j(\kappa)=\kappa^{+}=\mu^{+2}$ and
$j^{2}(\kappa)$ is Mahlo,
${R}(\kappa,j(\kappa))*\dot{E}(j(\kappa),j^{2}(\kappa))
\Vdash j^{2}(\kappa)=j(\kappa)^{+}=\kappa^{+2}=\mu^{+3}$.

It remains to prove that
$(j^{2}(\kappa),j(\kappa),\kappa)\twoheadrightarrow(j(\kappa),\kappa,\mu)$
is forced as well.
By Proposition~8 there is a master condition 
$(1_{j(P(\kappa))}, \dot{s}^{*})$
for 
$$\pi_{\kappa j(\kappa)}\circ\mathrm{pr}:
j(P(\kappa)\star\dot{Q}(\kappa,j(\kappa)))
\to P(\kappa)\star\dot{Q}(\kappa,j(\kappa)).$$

Let
$\bar{G}\subset P(j(\kappa))\star\dot{Q}( j(\kappa), j^{2}(\kappa))$ 
be
$V$-generic with
$(1_{j(P(\kappa))},\dot{s}^{*})\in\bar{G}$.
Work in $V[\bar{G}]$.
Let
$G=\pi_{\kappa j(\kappa)}\circ\mathrm{pr}(\bar{G})
\subset P(\kappa)\star \dot{Q}(\kappa, j(\kappa))$,
which is $V$-generic.
Then $j:V\to M$ extends to 
$j:V[G]\to M[\bar{G}]$.
Note that in $V$
$$\pi_{\kappa j(\kappa)}\times\mathrm{pr}_{\kappa}:
P(j(\kappa))\star\dot{Q}(j(\kappa), j^{2}(\kappa))\to
{P(\kappa)\star\dot{Q}(\kappa, j(\kappa))}*\dot{E}(j(\kappa), j^{2}(\kappa))$$
is a projection.
Thus we get a $V[G]$-generic 
$H\subset E(j(\kappa), j^{2}(\kappa))^{V[G]}$.
Note that 
$j[H]\in [M[\bar{G}]]^{j^{2}(\kappa)}\subset M[\bar{G}]$
because 
$|P(j(\kappa))\star\dot{Q}(j(\kappa), j^{2}(\kappa))|\le j^{2}(\kappa)$ 
in $V$.
Moreover $j[H]$ is a directed subset of
$E(j^{2}(\kappa),j^{3}(\kappa))^{M[\bar{G}]}$.

Let $q^{*}$ be the coordinatewise union of $j[H]$.
Then ${q}^{*}\in M[\bar{G}]$ is a map from
$$\textstyle\bigcup\{j(d):d \subset\mathsf{SR}\cap (j(\kappa),j^{2}(\kappa))
\text{ is Easton in }V[G]\},$$ 
which is an Easton subset of 
$\mathsf{SR}\cap(j^{2}(\kappa),j^{3}(\kappa))$.
To see that 
${q}^{*}\in E(j^{2}(\kappa),j^{3}(\kappa))^{M[\bar{G}]}$,
let $\xi\in\mathrm{dom}~{q}^{*}$.
Then
${q}^{*}(\xi)=\bigcup\{j(q)(\xi):q\in H\cap E(j(\kappa),\delta)^{V[G]}\}$
for some $\delta<j^{2}(\kappa)$.
Thus
${q}^{*}(\xi)\in\xi^{<j^{2}(\kappa)}$, as desired.

Let 
$\bar{H}\subset E( j^{2}(\kappa),j^{3}(\kappa))^{M[\bar{G}]}$
be $V[\bar{G}]$-generic with $q^{*}\in\bar{H}$.
Since $j[H]\subset \bar{H}$,
$j:V[G]\to M[\bar{G}]$ extends to 
$j:V[G*H]\to M[\bar{G}*\bar{H}]$
in $V[\bar{G}*\bar{H}]$.

Fix 
$f: j^{2}(\kappa)^{<\omega}\to j^{2}(\kappa)$ in $V[G*H]$.
Then in $M[\bar{G}*\bar{H}]$
there is
$x\in[j^{3}(\kappa))]^{j^{2}(\kappa)}$
closed under $j(f)$
such that 
$|x\cap j^{2}(\kappa)|=j(\kappa)$
and
$|x\cap j(\kappa)|=|\kappa|=\mu=j(\mu)$,
as witnessed by $j[j^{2}(\kappa)]$.
Thus in $V[G*H]$ there is 
$x\in[ j^{2}(\kappa)]^{j(\kappa)}$
closed under $f$ such that 
$|x\cap j(\kappa)|=\kappa$ and
$|x\cap\kappa|=\mu$, as desired.
\end{proof}

\end{document}